\newcommand{\ba}{\begin{array}}
	\newcommand{\bal}{\begin{array}{l}}
		\newcommand{\be}{\begin{equation}}
			\newcommand{\beqa}{\begin{eqnarray}}
				\newcommand{\bl}{\begin{lem}}
					\newcommand{\bt}{\begin{teo}}
						\newcommand{\C}{\mathbb{C}}
						\newcommand{\ea}{\end{array}}
					\newcommand{\ee}{\end{equation}}
				\newcommand{\eeqa}{\end{eqnarray}}
			\newcommand{\el}{\end{lem}}
		\newcommand{\et}{\end{teo}}
	\newcommand{\kxn}{k[x_1,\dots,x_n]}
	\newcommand{\la}{\langle}
	\newcommand{\mc}{\mathcal}
	\newcommand{\N}{\mathbb{N}}
\newcommand{\nk}{\mathcal{N}_\lambda^{(k)}}
	\newcommand{\Q}{\mathbb{Q}}
	\newcommand{\ra}{\rangle}
	\newcommand{\vnj}{\mathcal{V}^n_j}
	\newcommand{\xb}{{\bf x}}
	\newcommand{\yb}{{\bf y}}
	\newcommand{\Z}{\mathbb{Z}}
	\newtheorem{lem}{Lemma}
	\newtheorem{thm}{Theorem}
	\newtheorem{pro}{Proposition}
	\theoremstyle{definition}                 %%%%%%%%%%%% dss modification
	\newtheorem{exstar}{Example}             %%%%%%%%%%%% dss modification
\begin{document}

\title{{First integrals and invariants of  systems of ODEs}}
\author{Mateja Gra\v si\v c$^{1,2}$, Abdul Salam Jarrah$^3$ and Valery G.~Romanovski$^{1,4,5}$\\
$^1${\it Faculty of Natural Science and Mathematics,}\\ {\it  University of Maribor,
Koro\v ska cesta 160, SI-2000 Maribor, Slovenia}\\
$^2${\it Institute of Mathematics, Physics and Mechanics,}\\
{\it Jadranska 19, SI-1000 Ljubljana, Slovenia}\\
$^3${\it Department of Mathematics and Statistics}\\{\it American University of Sharjah,  Sharjah, UAE}\\
$^4${\it Faculty of Electrical Engineering and Computer Science,} \\ {\it University of Maribor,
 Koro\v ska cesta 46, SI-2000 Maribor, Slovenia}\\
$^5${\it Center for Applied Mathematics and Theoretical Physics,}\\
{\it Mladinska 3, SI-2000 Maribor, Slovenia}
}
\date{}

\maketitle

\begin{abstract}

 We investigate the interplay between monomial first integrals, polynomial invariants of certain group action, and the Poincaré–Dulac normal forms for  autonomous systems of ODEs with diagonal matrix of the linear part. Using tools from computational algebra, we develop an algorithmic approach for identifying generators of the algebras of monomial and polynomial first integrals, which works  in the general case  where the matrix of the linear part includes algebraic complex eigenvalues. Our method also provides a practical tool for exploring the algebraic structure of polynomial invariants and their relation to the Poincar\'e-Dulac  normal forms of the underlying vector fields. 
 %These results contribute to the broader understanding of invariant theory in the context of differential equations and offer new insights into the classification and analysis of nonlinear dynamical systems.
\end{abstract}

\section{Introduction}		

%{\color{red}I suggest we use the notation $\Z_+$ instead of $\N_0.$}\\
%{\color{blue}Mateja: I would prefer ${\mathbb N}_0$, since ${\mathbb Z}_+$ for me would intuitively mean positive integers. But of course we can change that - I saw that in some papers this notation is used for nonnegative integers.}

%In this paper we study:

%- monomial first integrals, 

%-invariants,

First integrals and invariants are fundamental tools in the study of differential equations. They allow us to better understand the qualitative behavior of solutions by revealing deep structural properties of the system. In particular, first integrals—functions that remain constant along solution trajectories—help identify conserved quantities such as energy, momentum, or other geometric invariants. These conserved quantities can simplify the analysis of a system, reduce the number of effective variables, or even allow the system to be integrated completely in special cases (see e.g. \cite{A,AC,Bibikov,Bru,LPW,YZ,Zhang} and the references given there).
Invariants, in general, provide information about the symmetries and stability of the system. %They help detect equilibrium points, analyze global dynamics, and understand how solutions behave over long time intervals. Moreover, 
The presence of invariants is often key to the classification of differential systems up to certain equivalence relations.

%The relation of monomial first integrals and the Poincar\'e-Dulac normal forms has been studied in depth in \cite{KWZ,LPW,St,WalcherNF}. 

The connection between monomial first integrals and Poincaré-Dulac normal forms has been studied extensively in \cite{KWZ, LPW, St, WalcherNF}. The theory of polynomial invariants for parametric families of ODEs was developed by Sibirsky and his school \cite{Sib1, Sib2}.  These invariants are important for the classification of ODEs \cite{ALSV} and for studies related to the center-focus problem \cite{Liu2, RS}.

In this paper, we study monomial first integrals, polynomial invariants, 
and their connection to Poincar\'e-Dulac normal forms
for the $n$-dimensional autonomous system 
		\be
		\label{Asn}
		\dot x = Ax+  X(x), \hspace{3mm} x \in \C^n, %\mathbb{F}^n,
		\ee
		where  $A$ is a complex diagonal matrix,
        \be \label{Adiag}
A={\rm diag}(\lambda_1,\dots, \lambda_n),
\ee
and \(X(x)\) is a power series without constants or linear terms.

Let $\mathcal{V}^n$ be the space of polynomial vector fields $v: \mathbb{C}^n \rightarrow \mathbb{C}^n$ and, for $j >1$, let  $\vnj \subseteq \mathcal{V}^n$ be the subspace of vector fields $v$ whose components $v_i$ are homogeneous polynomials of degree $j$, for $i=1,\dots, n$. 
%Note that both $\mathcal{V}^n$ and $\vnj$ are vector spaces over $\mathbb{C}$, and $\mathcal{V}^n=\bigoplus_{j=0}^\infty\vnj$. 
		%Let 	$$\lambda=(\lambda_1, \dots, \lambda_n)^T\in \C^n$$ be the column vector   of eigenvalues of $A$.  For $\alpha=(\alpha_1,\dots,\alpha_n) \in \N_0^n$ we denote $\langle\lambda,\alpha\rangle=\sum_{i=1}^n \alpha_i \lambda_i$.
It is not difficult to see   that  any   formal invertible  change of coordinates of the form 
		\be
		\label{yhy}
		x = y+ \  H(y)= y +\sum_{j=2}^\infty  H_j(y), 
		\ee
		with  $ H_{j} \in 
		\mathcal{V}^n_{j}$ for all $j\geq 2$, 
		brings system (vector field) \eqref{Asn} to a system of a similar form, 
		\be
		\label{linearni}
		\dot y = A y+ G(y),
		\ee
		where 
		$$ G(y)= \sum_{j=2}^\infty \ G_j(y), \quad \mbox{ and }  G_j(y)\in \vnj \quad \mbox{ for all } j \geq 2.
		$$ 
%Let $\lambda_1, \ldots, \lambda_n$ be the eigenvalues of $A_s$, define $\lambda=(\lambda_1, \ldots, \lambda_n)^\top$ and $|\alpha|=\alpha_1+\alpha_2+\dots+\alpha_{n}$ for $\alpha=(\alpha_1, \ldots, \alpha_n)\in \mathbb{N}^n_0$.

Let $\mathbb{N}_0 = \mathbb{N}\cup \{0\}$ be the set of nonnegative integers. For $\alpha=(\alpha_1,\dots,\alpha_n) \in \mathbb{N}_0^n$, we define $x^\alpha := \prod_{i=1}^n x_i^{\alpha_i}$, where $x=(x_1,\dots,x_n)$ is a column vector. Let ${ e}_k$ be the $n$-dimensional row unit vector for $k=1,\ldots, n$. A term $a y^\alpha$  in ${ e}_k G(y)$ or $bx^\alpha$ in $e_kX(x)$ is called \emph{resonant} if 
%$k$ and $\alpha\in {\mathbb N}_0^n$ satisfy
		%if is of the form $ g^{(\alpha)} y^\alpha e_k$  with  
		\be \label{res_c_e} 
		\la \lambda , \alpha\ra  - \lambda_k=0,
		\ee
where $\lambda=(\lambda_1,\dots, \lambda_n)$ and  $\la \lambda, \alpha\ra $ is  the usual inner product of $n$-tuples.
We denote by $\nk$ the set of all solutions $\alpha\in \N_0^n$ to \eqref{res_c_e}, 
that is 
\be \label{nk}
\nk=\{  \alpha\in \N_0^n \ : \ \la \lambda ,  \alpha\ra  - \lambda_k=0   \}. 
\ee 
System \eqref{linearni} is said to be in the \emph{Poincar\'e--Dulac normal form}, or simply in normal form, if $G(y)$ contains only resonant terms.
By the Poincar\'e-Dulac theorem \cite{P,Dul}, any system \eqref{Asn} can be transformed to a normal form by a suitable transformation \eqref{yhy} \cite{P,Dul}. In particular, transformation \eqref{yhy}, which brings system \eqref{Asn} to a  Poincar\'e--Dulac normal form,  is called  a  normalization or a normalizing transformation.% and can be defined as follows.

%		For $ v(x)\in \mathcal{V}^n $ we denote by $Dv(x)$ the $n\times n$ matrix of partial derivatives of $v(x)$ (Jacobian matrix of vector field $v(x)$). 
%		The linear operator $\pounds : \mathcal{V}^n\to \mathcal{V}^n$  defined by 
%		\be \label{ho}		(\pounds v)(x) =Dv(x) Ax-A v(x)		\ee	is called the \emph{homological operator}.
 %       It can also be defined as $ (\pounds v)(x) = [A x, v(x) ]$, using the standard Lie bracket notation \cite{KWZ}.

%In such way we obtain the vector field \eqref{linearni} which is an element of $\ker \pounds$. 

Equivalently (see e.g. \cite{AFG,LPW,WalcherNF}), 
system \eqref{linearni}  is in the Poincar\'e–Dulac normal form if $ [Ay,G(y)] = 0,$ 
that is,  for all $j\ge 2$, $[Ay,G_j(y)] = 0.$

Following \cite{KWZ}, we define the centralizers
\be \label{ker}
C_0^{for} (A) :=\{  g\in  \C[[x_1,\dots,  x_n]]^n \ :  [g, A] = 0\}
\ee 
and 
$$
C^{pol} (A) :=\{  g\in  \C[x_1,\dots,  x_n]^n \ :  [g, A] = 0\}.
$$
%It is easy to see that  $\ker \pounds$ is equal to  $C_0^{for} (A)$. %is that latter contains only polynomial vector fields, where as the former contains also power series vector fields. 
%It is easy to see that \be \label{ker}
%\ker \pounds = C^{for} (A).\ee 
By Lemma 2.9 of \cite{KWZ}  and Corollary 4.5.9 of \cite{Mur}, $C_0^{for}(A)$ is spanned as a $\C$-vector space by all $ x^\alpha e_k$  with $\alpha\in \mathcal{N}_\lambda^{(k)}$.
%and in particular contains every $ x^\alpha e_k, \  1\le  k \le  n$. 
To our knowledge, at present there are no algorithmic methods available to compute a generating set of 
%$\mathcal{N}_\lambda^{(k)}$ 
$C_0^{for}(A)$ for a given $A$
-- some particular cases are discussed in \cite{KWZ,Mur}. 
%In the case where all $\lambda_i$ are rational, the set that generates $\mathcal{M}_\lambda$ can be computed using Algorithm 1.4.5 of \cite{St-AIT}. 

Recall that 
a  polynomial $f \in \kxn$  (where $k$ is a field) is  invariant under the action of the group $\mathfrak{G}$ (or simply {an invariant of $\mathfrak{G}$}) if $f({ x}) = f(\mathfrak{g} { x})$  for every ${ x} \in k^n$ and every $\mathfrak{g} \in \mathfrak{G}$. 

In this paper, 
we first deal with monomial first integrals of the linear approximation of \eqref{Asn},  that is, of the system
 \begin{equation}\label{Als}
\dot x =A x, 
\end{equation}
and propose an algorithm to compute the generators
of the algebra of monomial  first integrals of \eqref{Als}.
Subsequently, we establish a connection between specific monomial invariants of polynomial systems \eqref{Asn} and first integrals of linear differential systems. Furthermore, we adapt our algorithm to compute generators of the algebra of invariants for \eqref{Asn}, applicable even when some eigenvalues  are not rational but algebraic elements. In Section \ref{sec:nf}, we demonstrate how our algorithm effectively describes the structures of  $C_0^{for}(A)$ and $C^{pol}(A)$.

This work contributes to the broader understanding of polynomial invariants, monomial first integrals, and the structure of Poincar\'e-Dulac normal forms of ordinary differential equations. 
%reversibility in differential equations. Our findings not only extend classical results but also provide practical computational tools for analyzing the integrability and symmetry properties of polynomial vector fields. 
%The techniques developed here have potential applications in dynamical systems, mathematical physics, and related areas, offering a unified approach to understanding invariant structures in nonlinear differential equations.

\section{Preliminaries}

%Let $\N_0 = \N\cup \{0\}$. 
%For $\alpha=(\alpha_1,\alpha_2,\dots,\alpha_n) \in \N_0^n$, we denote $|\alpha|:=\sum_{i=1}^n \alpha_i$ %\alpha_1+\alpha_2+\dots+\alpha_n$ 
%and $x^\alpha := \prod_{i=1}^n x_i^{\alpha_i}$.  %\x_1^{\alpha_1}  x_2^{\alpha_2} \cdots x_n^{\alpha_n}$where $x=(x_1,\dots,x_n)$ is a column vector.
For $A$ defined by \eqref{Als}, let $\lambda=(\lambda_1, \dots, \lambda_n)$, where $\lambda_1, \dots, \lambda_n$ are the diagonal elements of $A$,  
and denote
\begin{equation}\label{efre}
\mathcal{M}_\lambda:=\left\{\alpha \in \N_0^n\  | \  \langle\lambda,\alpha\rangle=0
%,\ |\alpha|\geq 0
\right\} = \left\{\alpha \in \N_0^n\  | \ A \alpha^{\top}=0\right\}.
\end{equation}
Clearly, 
 $\mathcal{M}_\lambda$  is a submonoid of the additive monoid $\Z^n$.

It is obvious that the elements $\alpha=(\alpha_1,\dots, \alpha_n)$ of $\N_0^n$
are in one-to-one correspondence with the monic monomials $x^\alpha=x_1^{\alpha_1}\cdots x_n^{\alpha_n}$
of the polynomial ring $\C[x_1,\dots, x_n]$. Furthermore, $ \alpha\in \mathcal{M}_\lambda$
if and only if the monomial  $x^\alpha$ is a first integral of  system \eqref{Als}.
%see Lemma 1 of \cite{LPW}.
%(so $x^\alpha$ is a \emph{monomial first integral} of \eqref{Als}).

 Let $I(\lambda)$ be the algebra of polynomial first integrals of \eqref{Als}
and   $\mathcal{R}_\lambda$ be  the $\mathbb{Z}$-module spanned by the
elements of $\mathcal{M}_\lambda$.
% and  by $d$ the rank of $\mathcal{R}_\lambda$.
By the results of \cite{LPW,WalcherNF}, we have the following statement.

\begin{lem}\label{lem:LPW}
 The algebra $I(\lambda)$ of polynomial first integrals of system \eqref{Als}
  is finitely generated $\C$-algebra. Furthermore, if  
  %\color{orange} 
  $\text{rank}(\mathcal{R}_\lambda)=d$, then there are exactly $d$ functionally independent polynomial first
integrals of \eqref{Als}.
\end{lem}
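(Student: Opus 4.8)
The plan is to establish the two assertions separately: finite generation of $I(\lambda)$ as a $\C$-algebra, and the count of functionally independent first integrals in terms of $\text{rank}(\mathcal{R}_\lambda)$.

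\medskip

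\noindent\textbf{Finite generation.} First I would observe that a polynomial $f = \sum_\alpha c_\alpha x^\alpha$ is a first integral of \eqref{Als} if and only if each monomial $x^\alpha$ appearing in $f$ (with $c_\alpha \neq 0$) is itself a first integral; this follows because $\dot f = \sum_\alpha c_\alpha \langle \lambda,\alpha\rangle x^\alpha$, the monomials $x^\alpha$ are linearly independent in $\C[x_1,\dots,x_n]$, and hence $\dot f = 0$ forces $\langle\lambda,\alpha\rangle = 0$ for every $\alpha$ in the support of $f$. Consequently $I(\lambda)$ is the $\C$-span of $\{x^\alpha : \alpha \in \mathcal{M}_\lambda\}$, i.e. it is the monomial algebra (semigroup algebra) $\C[\mathcal{M}_\lambda]$ associated to the submonoid $\mathcal{M}_\lambda \subseteq \N_0^n$. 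Since $\mathcal{M}_\lambda = \N_0^n \cap \ker_{\Z}(A)$ is the set of lattice points of a rational polyhedral cone (the intersection of the nonnegative orthant with a rational subspace), Gordan's lemma applies and guarantees that $\mathcal{M}_\lambda$ is a finitely generated monoid. Choosing a finite generating set $\alpha^{(1)},\dots,\alpha^{(m)}$ of $\mathcal{M}_\lambda$, the corresponding monomials $x^{\alpha^{(1)}},\dots,x^{\alpha^{(m)}}$ generate $I(\lambda)$ as a $\C$-algebra, which is the first claim.

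\medskip

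\noindent\textbf{The count of independent integrals.} For the second part, set $d = \text{rank}(\mathcal{R}_\lambda)$, where $\mathcal{R}_\lambda = \Z\text{-span}(\mathcal{M}_\lambda) \subseteq \Z^n$. I would produce $d$ functionally independent monomial first integrals and then argue no more can exist. For existence: pick $\beta^{(1)},\dots,\beta^{(d)} \in \mathcal{M}_\lambda$ that are $\Z$-linearly independent (possible since $\mathcal{M}_\lambda$ spans the rank-$d$ module $\mathcal{R}_\lambda$), and consider the monomials $u_i = x^{\beta^{(i)}}$. Their functional independence amounts to the Jacobian matrix $\big(\partial u_i/\partial x_j\big)$ having rank $d$ on a dense set; a direct computation gives $\partial u_i / \partial x_j = \beta^{(i)}_j\, u_i / x_j$, so the Jacobian factors as $D_1 \cdot B \cdot D_2$ where $B = (\beta^{(i)}_j)$ is the $d\times n$ integer matrix of exponent vectors and $D_1 = \text{diag}(u_i)$, $D_2 = \text{diag}(1/x_j)$ are invertible on the open set where all $x_j \neq 0$. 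Since the $\beta^{(i)}$ are linearly independent, $\text{rank}(B) = d$, hence the Jacobian has rank $d$ generically, giving $d$ functionally independent first integrals. For the upper bound: any polynomial first integral lies in the $\C$-span of $\{x^\alpha : \alpha \in \mathcal{M}_\lambda\}$, and all these monomials have exponent vectors in the rank-$d$ module $\mathcal{R}_\lambda$; a standard argument (e.g. via the rank of the Jacobian of an arbitrary collection of such monomials, which factors through the matrix of exponents whose row space sits inside $\mathcal{R}_\lambda \otimes \Q$) shows that no more than $d$ of them can be functionally independent. More conceptually, one can invoke the general fact that the transcendence degree of the field of rational first integrals of a linear system equals $n$ minus the dimension of a generic orbit, and relate this to $d$; but the Jacobian-rank argument is self-contained and I would prefer it here.

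\medskip

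\noindent The main obstacle is the upper bound in the second part: showing that $d$ functionally independent integrals is the \emph{maximum}, not just an attainable number. The clean way is to note that every monomial first integral is a Laurent monomial in the $u_i = x^{\beta^{(i)}}$ (since $\mathcal{M}_\lambda$ sits inside $\mathcal{R}_\lambda$ which is free of rank $d$ on the basis induced by the $\beta^{(i)}$), so all first integrals are algebraic functions of these $d$ monomials, whence the transcendence degree of the first-integral field over $\C$ is at most $d$; combined with the construction above it is exactly $d$. Carefully handling the passage between $\mathcal{M}_\lambda$ (a monoid) and $\mathcal{R}_\lambda$ (its $\Z$-span)—in particular that a first integral need not itself be a power of the chosen basis monomials but is a Laurent monomial in them—is the delicate point; everything else is routine. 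Since Lemma~\ref{lem:LPW} is attributed to \cite{LPW,WalcherNF}, I would also cite those sources for the sharper statement and present the above as a streamlined proof.
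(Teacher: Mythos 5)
Your argument is essentially correct, but note first that the paper itself gives no proof of Lemma~\ref{lem:LPW}: it is imported wholesale from \cite{LPW,WalcherNF} (``By the results of \cite{LPW,WalcherNF}\dots''), and the finite generation of $\mathcal{M}_\lambda$ is only re-used later, in Lemma~\ref{lem2n}, via Proposition~7.15 of \cite{MS}. So what you have written is a self-contained replacement rather than a reconstruction: the reduction to monomial integrals, the identification $I(\lambda)=\C[\mathcal{M}_\lambda]$, Gordan's lemma for finite generation, and the Jacobian-factorization $D_1BD_2$ plus a transcendence-degree bound for the count $d=\mathrm{rank}(\mathcal{R}_\lambda)$. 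This buys independence from the cited sources at the cost of redoing, in elementary form, what \cite{LPW,WalcherNF} prove in greater generality; it is a legitimate alternative and is close in spirit to how the paper itself treats $\mathcal{M}_\lambda$ algorithmically in Section~\ref{sec_DioEqu}.

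Two points need tightening. First, the cone you feed to Gordan's lemma must be rational, and the hyperplane $\langle\lambda,\alpha\rangle=0$ is \emph{not} rational when the $\lambda_i$ are irrational or non-real; the correct object is the $\R$-span $W$ of the saturated lattice $K=\{\alpha\in\Z^n:\langle\lambda,\alpha\rangle=0\}$, which is a rational subspace with $W\cap\Z^n=K$ (saturation holds because $\langle\lambda,m\alpha\rangle=0$ forces $\langle\lambda,\alpha\rangle=0$), so $\mathcal{M}_\lambda=\N_0^n\cap W$ and Gordan applies --- this is exactly the reduction the paper performs with the integer matrix $\mathfrak{A}$ in Lemma~\ref{lem:u}. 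Second, your parenthetical claim that every monomial first integral is a Laurent monomial in $u_1,\dots,u_d$ is unjustified: $d$ linearly independent elements $\beta^{(1)},\dots,\beta^{(d)}$ of $\mathcal{M}_\lambda$ only span a finite-index sublattice of $\mathcal{R}_\lambda$, and even an honest $\Z$-basis of $\mathcal{R}_\lambda$ need not lie in $\mathcal{M}_\lambda$. What is true, and all your argument needs, is that for each $\alpha\in\mathcal{M}_\lambda$ some positive multiple $m\alpha$ is a $\Z$-combination of the $\beta^{(i)}$, so $x^\alpha$ is algebraic over $\C(u_1,\dots,u_d)$; hence every polynomial first integral is algebraic over this field, the transcendence degree is at most $d$, and by the Jacobian criterion in characteristic zero at most $d$ of them are functionally independent. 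With these two repairs the proof is complete.
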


\begin{lem}\label{lem2n}
   The monoid $\mathcal{M}_\lambda $  has a unique generating set $H_\lambda$ (called the Hilbert basis of $\mathcal{M}_\lambda$).
\end{lem}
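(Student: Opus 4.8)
The plan is to realize $\mathcal{M}_\lambda$ as the set of nonnegative integer solutions of a homogeneous linear system and then invoke the classical theory of affine (rational polyhedral) monoids, for which the existence of a finite generating set is Gordan's lemma and the uniqueness of the minimal one is a standard fact about pointed affine monoids. Concretely, $\mathcal{M}_\lambda = \{\alpha \in \N_0^n : \langle \lambda, \alpha\rangle = 0\}$, so $\mathcal{M}_\lambda = C \cap \Z^n$ where $C = \{\alpha \in \R_{\ge 0}^n : \langle \lambda, \alpha\rangle = 0\}$ is a rational polyhedral cone (rational because, after discarding any $\lambda_i$ that are zero and trivially including those coordinates, the linear constraint can be taken with rational — indeed integer — coefficients once we note that $\langle\lambda,\alpha\rangle=0$ over $\N_0^n$ is governed by the $\Q$-linear, hence $\Z$-linear, relations among the $\lambda_i$). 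By Gordan's lemma, $C \cap \Z^n$ is a finitely generated monoid.

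The key steps, in order. First, I would check that $C$ is a \emph{pointed} cone, i.e.\ contains no line: since $C \subseteq \R_{\ge 0}^n$, if $\alpha, -\alpha \in C$ then $\alpha = 0$. Second, I would recall the standard notion of an \emph{irreducible} element of the monoid $\mathcal{M}_\lambda$: a nonzero $\alpha \in \mathcal{M}_\lambda$ such that $\alpha = \beta + \gamma$ with $\beta, \gamma \in \mathcal{M}_\lambda$ forces $\beta = 0$ or $\gamma = 0$. Because $\mathcal{M}_\lambda$ is pointed (no nontrivial units, again since $\mathcal{M}_\lambda \subseteq \N_0^n$ and the only invertible element of $\N_0^n$ is $0$) and is graded by, say, the total-degree function $|\alpha| = \sum_i \alpha_i$ which is positive on $\mathcal{M}_\lambda \setminus \{0\}$, every nonzero element is a finite sum of irreducibles (induction on $|\alpha|$), so the set $H_\lambda$ of irreducible elements generates $\mathcal{M}_\lambda$. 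Third, I would show $H_\lambda$ is \emph{contained in every} generating set $S$: if $\alpha \in H_\lambda$ and $\alpha = \sum_j c_j s_j$ with $s_j \in S$, $c_j \in \N_0$, then comparing total degrees and using irreducibility of $\alpha$ forces exactly one term, with coefficient $1$, so $\alpha \in S$. Fourth, combining steps two and three: $H_\lambda$ generates and is contained in every generating set, hence it is the unique minimal generating set; finiteness of $H_\lambda$ follows from Gordan's lemma (or directly from Dickson's lemma applied to the set of irreducibles, which forms an antichain under the componentwise partial order on $\N_0^n$).

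The main obstacle — really the only subtle point — is the rationality of the cone $C$, since a priori $\lambda \in \C^n$ with algebraic, possibly irrational, entries. The resolution is that $\langle\lambda,\alpha\rangle = 0$ for $\alpha \in \Z^n$ is equivalent to $\alpha$ lying in the kernel of the $\Z$-linear map $\Z^n \to \C$, $\alpha \mapsto \langle\lambda,\alpha\rangle$, and this kernel is a \emph{saturated} sublattice $L \subseteq \Z^n$ of some rank $n-d$ (here $d = \mathrm{rank}(\mathcal{R}_\lambda)$ as in Lemma~\ref{lem:LPW}); then $\mathcal{M}_\lambda = L \cap \N_0^n$, which is $\overline{L}_\R \cap \N_0^n$ with $\overline{L}_\R = L \otimes \R$ a rational subspace, so $C = \overline{L}_\R \cap \R_{\ge 0}^n$ is genuinely a rational polyhedral cone and Gordan's lemma applies verbatim. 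An even more elementary route avoiding cones altogether: $\mathcal{M}_\lambda$ is a submonoid of $\N_0^n$, its set of irreducibles is an antichain for the componentwise order, Dickson's lemma makes it finite, and the degree-counting argument above gives both generation and uniqueness — this is self-contained and I would probably present it this way.
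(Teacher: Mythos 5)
Your proof is correct, but it takes a more self-contained route than the paper. The paper's proof is essentially two citations: finite generation of $\mathcal{M}_\lambda$ is deduced from Lemma~\ref{lem:LPW} (the algebra $I(\lambda)$ is finitely generated, after \cite{LPW,WalcherNF}), and uniqueness of the minimal generating set is obtained by observing that $\mathcal{M}_\lambda$ is a pointed affine submonoid of $\Z^n$ and invoking Proposition~7.15 of \cite{MS}. You instead prove both ingredients directly: finiteness via Gordan's lemma applied to the rational cone $\overline{L}_\R\cap\R_{\ge 0}^n$ (or, more simply, via Dickson's lemma applied to the antichain of irreducibles), and uniqueness via the standard argument that in a pointed monoid graded by total degree the irreducible elements generate and lie in every generating set. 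Your treatment of the ``rationality'' issue --- replacing the complex equation $\langle\lambda,\alpha\rangle=0$ by membership in the saturated kernel lattice $L\subseteq\Z^n$ --- is exactly the reduction the paper performs later (Lemma~\ref{lem:u} and the matrix $\mathfrak{A}$), so you are anticipating machinery the paper defers to Section~\ref{sec_DioEqu}; the Dickson-lemma variant even makes Lemma~\ref{lem:LPW} and all cone language unnecessary. What each approach buys: the paper's proof is shorter and plugs into standard references, while yours is elementary and verifiable without outside results. One small inaccuracy, immaterial to the argument: your parenthetical claim that the kernel lattice has rank $n-d$ with $d=\operatorname{rank}\mathcal{R}_\lambda$ is false in general (e.g.\ $\lambda=(1,1)$ gives a rank-one kernel but $\mathcal{R}_\lambda=0$); equality of these ranks is precisely the nontrivial hypothesis singled out in Proposition~\ref{pro:1n} and \eqref{rfr}, so you should drop or correct that aside.
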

\begin{proof}
%By Lemma \ref{lem:LPW} the algebra of polynomial first integrals
%of \eqref{Als} is finitely generated, so the semigroup $\mathcal{M}_\lambda$
%is also finitely generated. Since $\bar 0=(0,\dots,0)$ is the only unit of $\mathcal{M}_\lambda$,
 %the semigroup $\mathcal{M}_\lambda$ is the so-called pointed affine semigroup. By 
%Proposition 7.15 of \cite{MS} it has a unique minimal finite generating set.
%$H_\lambda$.

{By Lemma \ref{lem:LPW}, the algebra of polynomial first integrals
of \eqref{Als} is finitely generated, so the additive monoid $\mathcal{M}_\lambda$
is also finitely generated. As a submonoid of ${\mathbb Z}^n$ with only one unit element, namely $\bar 0=(0,\dots,0)$, $\mathcal{M}_\lambda$ is the so-called pointed affine monoid (see \cite{MS} for definition). By 
Proposition 7.15 of \cite{MS} it has a unique minimal finite generating set.}
%$H_\lambda$.}
\end{proof}
%which is called  the Hilbert basis of  $\mathcal{M}_\lambda$.

The vector monomials $x^\alpha e_k ,\alpha \in \nk$, and the elements of the $\C$-vector space they generate are often called equivariants (see \cite{Mur}). 
 By Proposition 1.6 of \cite{WalcherNF} (see also Lemma 4.2 of \cite{KWZ}),
 $C^{pol}(A) $ is a finitely generated $I(\lambda)$-module.
Thus there arises an important problem to study the structure of $C^{pol}(A) $ (see e.g. \cite{KWZ,Mur,WalcherNF}).
By {Lemma 4.4} of \cite{KWZ}, if $\lambda_i\ne 0$ $(i=1,\dots,n)$ and all $\alpha\in \nk$ ($k=1,\dots, n$)  have non-negative entries, then $C^{pol}(A)$ is a free $I(\lambda)$-module of rank $n$ generated by $v_j(x)=x_j e_j$ where $j=1,\dots,n$. However, in general  the structure of $C^{pol}(A)$ can be rather complicated, and,
 to describe it,
% the structure of module $C^{pol}(A)$ 
one needs to find generators for $\mathcal{M}_\lambda$ and {describe the sets} $\nk$, $k=1,\dots, n.$ 
We consider this problem in Section \ref{sec:nf}. In particular, in this section  we present an algorithm to compute the equivariants of system \eqref{Als} and a unique presentation of $C^{pol}(A)$.

In the next section, we propose an algorithm to compute a Hilbert basis of $\mathcal{M}_\lambda$
and a generating set of the $\mathbb{Z}$-module $\mathcal{R}_\lambda$. \bigskip

%%%%%%%%%%%%%%%%%%%%%%%%%%%%%%%%%%%%%%%%%%%%%%%%%%%%%%%%%%%%%%%%%
\section{Monomial first integrals}\label{sec_DioEqu}
Obviously,
%By Lemma 1 of \cite{LPW},
a monomial $x^\alpha$, $\alpha\in \Z^n $, of the ring of Laurent polynomials in $x_1, \dots, x_n$, is a first integral  of linear system \eqref{Als}
if and only if 
\be \label{dio}
\la \alpha,  \lambda\ra =0.
\ee
Such a monomial $x^\alpha$ is called a  Laurent monomial first integral. 
The monomials corresponding to solutions $\alpha\in \N_0^n$ of equation \eqref{dio}
generate the algebra of polynomial first integrals of \eqref{Als}.

By Lemma \ref{lem:LPW}, the algebra of the polynomial first integrals  of  system \eqref{Als} is finitely generated. Since a polynomial $r(x)$ is a first integral of \eqref{Als} if and only if each of its monomials is a first integral of \eqref{Als}, to describe the algebra it is sufficient to find monomial first integrals of \eqref{Als}. By Lemma \ref{lem2n} the latter algebra has a unique generating set. 
In this section, we consider the problem of how to find the generators of the algebra under some assumptions on $\lambda$. 
%The first step is to find the Hilbert basis of the monoid \eqref{efre}.

Let $\lambda_1,\dots,\lambda_n$ be algebraic elements  of $\C$, that is, each $\lambda_i$ is a root of a polynomial with integer coefficients. Let $K = \Q(\lambda_1,\dots,\lambda_n)$ be the finite algebraic extension of $\Q$ containing $\lambda_1,\dots,\lambda_n$. Let $b_1,\dots,b_d$ be a basis of $K$ over $\Q$. Then each $\lambda_i$ is a $\Q$-linear combination of $b_j$'s. That is, 
\begin{equation}\label{lc_lambda}
\lambda_i = \sum_{j=1}^d c_{ji}b_j
\end{equation}
where $c_{ji} \in \Q$. Let $C=[c_{ji}]$ be the $d \times n$ matrix whose $ji$-entry is $c_{ji}$ and  $l = lcm(c_{ji} : j=1,\dots,d \mbox{ and }i=1,\dots,n) $.
Let $\mathfrak{A} = lC=[lc_{ji}]$ be the integer $d \times n$-matrix obtained from $C$ by clearing the denominators. In particular,
 \begin{equation}\label{matA}
 \mathfrak{A}=[\mathfrak{a}_1 \, \mathfrak{a_2} \cdots \mathfrak{a}_n], 
\end{equation}
 where $\mathfrak{a}_i$ represents the $i$-th column of the matrix and corresponds to $\lambda_i$.

\begin{lem}\label{lem:u}
For $\alpha = (\alpha_1,\alpha_2,\dots,\alpha_n) \in \Z^n$,
\[
\langle\alpha,\lambda\rangle=\sum_{i=1}^n \lambda_i \alpha_i =0 \iff \mathfrak{A}\alpha=0.
\]
\end{lem}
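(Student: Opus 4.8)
The plan is to expand $\langle\alpha,\lambda\rangle$ in terms of the fixed $\Q$-basis $b_1,\dots,b_d$ of $K$ and then invoke linear independence of the $b_j$ over $\Q$. Substituting \eqref{lc_lambda} into the inner product gives
\[
\langle\alpha,\lambda\rangle=\sum_{i=1}^n \alpha_i\lambda_i=\sum_{i=1}^n \alpha_i\sum_{j=1}^d c_{ji}b_j=\sum_{j=1}^d\Bigl(\sum_{i=1}^n c_{ji}\alpha_i\Bigr)b_j .
\]
For each $j$, the coefficient $\sum_{i=1}^n c_{ji}\alpha_i$ lies in $\Q$, since $c_{ji}\in\Q$ and $\alpha_i\in\Z$. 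Because $b_1,\dots,b_d$ are $\Q$-linearly independent, the sum above vanishes if and only if every coefficient vanishes, i.e. if and only if $\sum_{i=1}^n c_{ji}\alpha_i=0$ for all $j=1,\dots,d$. In matrix form this is exactly $C\alpha=0$.

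The final step is to pass from $C$ to $\mathfrak{A}$. Since $\mathfrak{A}=lC$ with $l=\mathrm{lcm}(c_{ji})$ a nonzero scalar, the linear systems $C\alpha=0$ and $\mathfrak{A}\alpha=0$ have the same solution set, so $\langle\alpha,\lambda\rangle=0\iff \mathfrak{A}\alpha=0$, which is the claim. (One should just note explicitly that $l\neq 0$, so that multiplication by $l$ does not create spurious solutions.)

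I do not expect any real obstacle here; the only point requiring a word of care is the observation that the bracketed coefficients are genuinely \emph{rational}, which is what licenses the use of $\Q$-linear independence of the basis $b_1,\dots,b_d$ — if the $\alpha_i$ were allowed to be arbitrary complex (rather than integer) the argument would fail. Everything else is a routine reindexing of a double sum.
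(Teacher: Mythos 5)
Your argument is correct and follows essentially the same route as the paper's proof: expand $\langle\alpha,\lambda\rangle$ via \eqref{lc_lambda}, invoke the $\Q$-linear independence of $b_1,\dots,b_d$ to reduce to $C\alpha=0$, and then pass to $\mathfrak{A}\alpha=lC\alpha=0$. Your added remarks (that the coefficients $\sum_i c_{ji}\alpha_i$ are rational and that $l\neq 0$) are sensible points of care, but they do not change the substance of the argument.
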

\begin{proof}
    We use the notations presented in the paragraph preceding the lemma. Using the basis $\{b_1,\ldots,b_d \}$ of $K$ over $\mathbb Q$ and the expressions \eqref{lc_lambda}, we have $\langle\alpha,\lambda\rangle=\sum_{j=1}^d\left(\sum_{i=1}^n c_{ji}\alpha_i\right)b_j$. 
    {Since $b_1,\dots, b_d$ are independent over $\Q$, it yields} that $ \langle\alpha,\lambda\rangle=0$ if and only if
    %\footnote{\textcolor{orange}{Mateja, why "if and only if"? }} 
    \begin{equation*}
        \sum_{i=1}^nc_{ji}\alpha_i=0\ \text{ for all }\ j=1,\ldots,d,
    \end{equation*}
    which is equivalent to $C\alpha^\top=0$ and also  $\mathfrak{A}\alpha^\top = lC\alpha^\top=0$.
\end{proof}
By Lemma \ref{lem:u},
the set $\left\{\alpha \in \N_0^n \ | \ \mathfrak{A} \alpha^\top=0\right\}$ is the same as the monoid $\mathcal{M}_\lambda$ defined by \eqref{efre} . 

Notice that  
\be\label{kr}
K_\rho = \left\{ \alpha \in \mathbb{Z}^n \ \middle| \ \mathfrak{A} \alpha^\top = 0 \right\}
\ee  
is a subgroup of the finitely generated abelian group \(\mathbb{Z}^n\). Therefore, \(K_\rho\) is itself a finitely generated abelian group, and its basis can be readily computed, for example, using Smith normal form, as the kernel of the group homomorphism  
\[
\rho : \mathbb{Z}^n \longrightarrow \mathbb{Z}^d, \quad \rho(\nu) = \mathfrak{A} \nu.
\]
Furthermore, the monoid  
\[
\mathcal{M}_\lambda = \mathbb{N}_0^n \cap K_\rho
\]  
consists of the nonnegative integer solutions in the kernel.
However, computing a minimal generating set (or minimal spanning set) $\mathcal{H}_\lambda$ for $\mathcal{M}_\lambda$ is more subtle and generally requires tools from integer linear programming  or commutative algebra, such as Gr\"obner bases or Hilbert basis techniques \cite{LDE1,LDE2, LDE3}.

{
We note that the following statement takes place.\begin{pro} \label{pro:1n}
  If the rank of $\ker \rho$ is equal to $p$ and the rank of $\mathcal{R}_\lambda$ is also $p$, then both  $K_\rho$ and $\mathcal{R}_\lambda$ are generated by $p$ linearly independent elements of $H_\lambda$. \end{pro}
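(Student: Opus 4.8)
The plan is to prove the statement in two stages: first that the rank hypothesis already forces $\mathcal{R}_\lambda = K_\rho$, so that it suffices to exhibit \emph{one} $\mathbb{Z}$-basis of this common lattice, and then that such a basis can be found among the elements of the Hilbert basis $H_\lambda$ of $\mathcal{M}_\lambda$. For the first stage, note $\mathcal{R}_\lambda = \mathbb{Z}\mathcal{M}_\lambda \subseteq K_\rho$ since $\mathcal{M}_\lambda = \mathbb{N}_0^n \cap K_\rho$ and $K_\rho$ is a group. For the reverse inclusion I would argue geometrically. Put $V = K_\rho \otimes_{\mathbb{Z}}\mathbb{R}$, so $\dim V = \operatorname{rank}(K_\rho) = p$ and, because $K_\rho = \ker\rho$ is a saturated sublattice (the target of $\rho$ is free), $\mathbb{Z}^n \cap V = K_\rho$. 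Let $\sigma = \operatorname{cone}(\mathcal{M}_\lambda) \subseteq V \cap \mathbb{R}_{\ge 0}^n$. Since $\operatorname{rank}(\mathcal{R}_\lambda) = \operatorname{rank}(\mathcal{M}_\lambda) = \dim\sigma = p = \dim V$, the rational cone $\sigma$ is full-dimensional in $V$, hence its relative interior contains a point of $K_\rho$; scaling, we obtain $u \in \mathcal{M}_\lambda$ lying in the relative interior of $\sigma$. Then for any $w \in K_\rho$ one has $u + \tfrac1N w \to u$, so $w + Nu \in \sigma$ for $N$ large; as $w + Nu \in \mathbb{Z}^n \cap K_\rho \cap \mathbb{R}_{\ge 0}^n = \mathcal{M}_\lambda$ and likewise $Nu \in \mathcal{M}_\lambda$, we conclude $w = (w+Nu)-Nu \in \mathcal{R}_\lambda$. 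Thus $\mathcal{R}_\lambda = K_\rho$.

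For the second stage I would induct on $p$. When $p=1$, $K_\rho = \mathbb{Z}v$ with $v$ primitive, $\mathcal{M}_\lambda \neq \{0\}$ forces $v$ or $-v$ into $\mathbb{N}_0^n$, and $H_\lambda = \{v\}$ is a basis. For the inductive step pick a facet $\Phi$ of the pointed cone $\sigma$; since $\sigma = V \cap \bigcap_i \{x_i \ge 0\}$ we may write $\Phi = \sigma \cap \{x_{i_0} = 0\}$ for a suitable coordinate $i_0$. Set $L_0 = K_\rho \cap \{x_{i_0}=0\}$, again a saturated sublattice; then $L_0 \cap \mathbb{N}_0^n = \mathcal{M}_\lambda \cap \{x_{i_0}=0\}$ has rank $\dim\Phi = p-1 = \operatorname{rank}(L_0)$, and every Hilbert-basis element of this facet monoid is also an element of $H_\lambda$ (if such an $m$ were $a+b$ with $a,b\in\mathcal{M}_\lambda\setminus\{0\}$, then $a_{i_0}=b_{i_0}=0$ and $m$ would already be reducible on the facet). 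By the inductive hypothesis $H_\lambda$ contains a $\mathbb{Z}$-basis $b_1,\dots,b_{p-1}$ of $L_0$. It now suffices to find a single $h \in H_\lambda$ whose $i_0$-th coordinate equals the least positive value $e$ of $x_{i_0}$ on $K_\rho$: then $b_1,\dots,b_{p-1},h$ generate $L_0 \oplus \mathbb{Z}h$, which surjects onto $K_\rho/L_0 \cong e\mathbb{Z}$ and contains the kernel $L_0$, hence equals $K_\rho$; being $p$ elements that generate $K_\rho$, they form a $\mathbb{Z}$-basis, and the same set is a basis of $\mathcal{R}_\lambda$ by the first stage.

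The step I expect to be the real obstacle is the construction of this $h$. Writing any $m\in\mathcal{M}_\lambda$ that attains the least positive value $e'$ of $x_{i_0}$ on $\mathcal{M}_\lambda$ as a sum of elements of $H_\lambda$ and invoking the minimality of $e'$ shows that exactly one summand has nonzero $i_0$-th coordinate, equal to $e'$; this gives $h\in H_\lambda$ with $h_{i_0}=e'$. One must still verify $e'=e$, i.e. that the minimum is already attained inside $\mathcal{M}_\lambda$ and not merely inside $K_\rho$. My plan here is a relative version of the sliding argument above: fix $\ell^{*}\in K_\rho$ with $\ell^{*}_{i_0}=e$ and a lattice point $f$ in the relative interior of $\Phi$; since $\Phi$ is full-dimensional in $L_0\otimes\mathbb{R}$, the coordinates that vanish identically on $\Phi$ are exactly those that vanish on $L_0$, and for each such coordinate $j$ the decomposition of an arbitrary element of $\mathcal{M}_\lambda$ along $L_0\oplus\mathbb{Z}\ell^{*}$ together with nonnegativity forces $\ell^{*}_j\ge 0$. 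Hence $\ell^{*}+Nf\in\mathbb{N}_0^n\cap K_\rho=\mathcal{M}_\lambda$ for large $N$ while keeping the $i_0$-th coordinate equal to $e$, so $e'\le e$ and therefore $e'=e$. Granting this identity, the remainder reduces to routine linear algebra over $\mathbb{Z}$.
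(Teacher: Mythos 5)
The paper states Proposition~\ref{pro:1n} without any proof (it is introduced only by ``We note that the following statement takes place''), so there is no argument of the authors to compare yours against; judged on its own, your two-stage proof is correct. Stage one ($\mathcal{R}_\lambda=K_\rho$ under the rank hypothesis) is sound: $K_\rho$ is saturated in $\Z^n$, the cone $\sigma$ spanned by $\mathcal{M}_\lambda$ is full-dimensional in $V=K_\rho\otimes_{\Z}\R$ exactly because $\operatorname{rank}\mathcal{R}_\lambda=p$, and the sliding argument $w=(w+Nu)-Nu$ with $u$ a relative-interior lattice point indeed gives $K_\rho\subseteq\mathcal{R}_\lambda$ (you could take $u=\sum_{h\in H_\lambda}h$ and skip the density/scaling step). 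Stage two, the induction on $p$ over facets, also works, and the delicate point you single out --- that the least positive value $e'$ of $x_{i_0}$ on $\mathcal{M}_\lambda$ equals the least positive value $e$ on $K_\rho$ --- is settled correctly: writing $K_\rho=L_0\oplus\Z\ell^*$ and decomposing any $m\in\mathcal{M}_\lambda$ with $m_{i_0}>0$ as $m=m_0+t\ell^*$, $t>0$, forces $\ell^*_j\ge 0$ for every coordinate $j$ vanishing on $L_0$, so $\ell^*+Nf\in\N_0^n\cap K_\rho=\mathcal{M}_\lambda$ for $N\gg 0$ with $i_0$-coordinate $e$. Two points should be made explicit in a polished version: (i) the identity $\sigma=V\cap\R^n_{\ge 0}$, which you use both to see that every facet of $\sigma$ is cut out by a coordinate hyperplane and to get $\operatorname{rank}(L_0\cap\N_0^n)=p-1$, is not immediate from the definition $\sigma=\operatorname{cone}(\mathcal{M}_\lambda)$; it follows because a rational cone (here with respect to the lattice $K_\rho$, resp.\ $L_0$) is generated by its lattice points; (ii) the existence of some $m\in\mathcal{M}_\lambda$ with $m_{i_0}>0$, needed to define $e'$ and in the decomposition argument, deserves a sentence --- it holds since the interior point $u$ of stage one has $u_{i_0}>0$ because $\Phi$ is a proper face. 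With these details added, your argument in fact proves a bit more than the statement: under the hypothesis one has $\mathcal{R}_\lambda=K_\rho$, and the $p$ chosen elements of $H_\lambda$ form a common $\Z$-basis of this lattice, which would be a worthwhile addition to the paper since the proposition is currently unproved there.
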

}

The Hilbert basis $H_\lambda$ of the monoid ${\mathcal M}_\lambda$ can be obtained using Algorithm \ref{alg1}, which employs Algorithm 1.4.5 of \cite{St-AIT}.
%Fix any elimination monomial order with $\{t_1,\ldots,t_d\}\succ \{ %x_1,\ldots,x_n\}\succ \{z_1,\ldots,z_n\}$ and  consider the $k$-algebra %homomorphism
%\begin{equation}\label{Hom_St}
%\begin{array}{c} 
%    {k}[x_1,\ldots,x_n,z_1,\ldots,z_n]\ \longrightarrow\ {k}%[t_1,\ldots,t_d,t_1^{-1},\ldots,t_d^{-1},z_1,\ldots,z_n]\\
%    x_i\ \mapsto\  z_i \displaystyle\prod_{j=1}^d t_j^{a_{ji}},\ \ z_i\ \mapsto z_i\ \ \text{ for all }\ i=1,\ldots, n.
%\end{array}
%\end{equation}

 \begin{algorithm}[ht]
\KwIn{A vector $\lambda = (\lambda_1,\dots,\lambda_n) \in\C^n$ of algebraic elements} {
\KwOut{Hilbert basis $H_\lambda$ of  ${\mathcal M}_\lambda$ and a minimal spanning set of $\mathcal{R}_\lambda$}
}
\nl Find a number field $K$ containing all $\lambda_i$

\nl Choose a $\Q-$basis $\{b_1,\dots,b_d\}$ for $K$

\nl Express each $\lambda_i$ in the chosen basis: 
Write 
\[
\lambda_i = c_{1i}b_1 +c_{2i}b_2+ \cdots + c_{di}b_d,  \mbox{ where }c_{ji} \in \Q
\]

\nl Let $C=[c_{ji}]$ be the $d\times n$-matrix of coefficients 

\nl  Convert to integer matrix: Let  $\mathfrak{A}= [a_{ji}] = lC=[lc_{ji}]$ be the integer $d \times n$-matrix obtained from $C$ by clearing the denominators

\nl   Compute the reduced Gr\"obner basis $\mathcal G$ for the ideal 
$$
 \left \langle x_i -  z_i \displaystyle\prod_{j=1}^d t_j^{a_{ji}}\ : \ i=1,\dots, n\right\rangle
$$     
in ${k}[t_1,\ldots,t_d,t_1^{-1},\ldots,t_d^{-1},x_1,\dots,x_n, z_1,\ldots,z_n]$  with respect to $\succ$:
\[
\{t_1^\pm,\ldots,t_d^\pm\}\succ \{x_1,\ldots,x_n\}\succ \{z_1,\ldots,z_n\}.
\]

\nl  The Hilbert basis $H_\lambda$ of ${\mathcal M}_\lambda$ consists of all vectors $\nu$ such that $\mathbf{x}^{\nu}-\mathbf{z}^{\nu}$ appears in $\mathcal G$.

\nl  The monomials $x^\nu $ such that $\mathbf{x}^{\nu}-\mathbf{z}^{\nu}$ appears in $\mathcal G$ generate $I(\lambda)$.

\caption{\bf Hilbert basis of ${\mathcal M}_\lambda $ and generators of $I(\lambda)$  \label{alg1}}
\end{algorithm}

\begin{exstar} \label{ex1}
    Let $\lambda = (1,\zeta, \zeta^2,-2,3)$ where $\zeta$ is the third root of unity. 
    The corresponding system \eqref{Als} is 
\be\label{ex1_sys}
\dot x ={\rm diag}(1,\zeta, \zeta^2,-2,3)\, x.
\ee

    Note that  $\zeta^2=-\zeta-1$, and hence, $\Q(\lambda)= \Q(\zeta)$, and $\{1,\zeta\}$ is a basis for $\Q(\zeta)$ over $\Q$. 
Therefore, 
\begin{eqnarray*}
    \lambda_1 &=& 1 =1 \cdot 1 +0 \cdot \zeta \\
    \lambda_2 &=& \zeta = 0 \cdot 1 +1 \cdot \zeta \\
    \lambda_3 &=& \zeta^2 =-1-\zeta = -1\cdot 1 +(-1) \cdot \zeta\\
    \lambda_4 &=& -2= 2\cdot 1 +0 \cdot \zeta \\
    \lambda_5 &=& 3= 3\cdot 1 +0 \cdot \zeta.
\end{eqnarray*}
Hence 
\[
\mathfrak{A}=
 \begin{bmatrix} 
 1 & 0 & -1 & -2 & 3\\ 
 0 & 1 & -1 & 0 & 0 
\end{bmatrix}.
\]
Implementing Algorithm \ref{alg1} in the computer algebra system Macaulay2 \cite{M2}, we obtain the Hilbert basis of ${\mathcal M}_\lambda$ to be
\[
H_\lambda =\{ (0,0,0,3,2), (0,1,1,1,1), (0,3,3,0,1), (1,0,0,2,1), (1,1,1,0,0), (2,0,0,1,0)\}
\]
and the generators of $I(\lambda)$ are, therefore, equal to
\be \label{genI}
 I_1=x_4^3x_5^2,\ I_2= x_2 x_3x_4 x_5, \  I_3 = x_2^3x_3^3x_5, \ I_4 = x_1x_4^2x_5, \ I_5 =x_1x_2x_3, \ I_6 = x_1^2x_4.
\ee 
The  Macaulay2 code and calculations for the example are given in Appendix A. 
\end{exstar}

  It is worth noting here that $\mathcal{R}_\lambda$ may not be free, as $\Z-$syzygies may exist among the elements of $\mathcal{H}_\lambda$. To compute a minimal generating set for the $\mathbb{Z}$-module $\mathcal{R}_\lambda$, classical methods from linear algebra can be used, as in Algorithm \ref{alg2}  presented  below.
  %\footnote{\textcolor{orange}{VR: Maybe under 2 below  it should be over $\Z$? AJ: Yes!}}   

\begin{algorithm}[H]
\KwIn{Hilbert basis $H_\lambda$}
\KwOut{Generating set of the $\mathbb{Z}$-module $\mathcal{R}_\lambda$}

\nl Let $N$ be the matrix whose columns are the vectors in the Hilbert basis $H_\lambda$. \\

\nl Compute the column space of $N$ over $\mathbb{Z}$  using Gaussian elimination. \\

\nl The set of columns of $N$ corresponding to pivot columns in the reduced echelon form forms a generating set for the $\mathbb{Z}$-module $\mathcal{R}_\lambda$.

\caption{{\bf Generating Set of the $\mathbb{Z}$-Module $\mathcal{R}_\lambda$} \label{alg2}}
\end{algorithm}

In Example \ref{ex1}, the matrix 
\[ 
N=   
\begin{bmatrix} 
0 & 0 & 0 & 1 & 1 & 2\\ 
0 & 1 & 3 & 0 & 1 & 0\\ 
0 & 1 & 3 & 0 & 1 & 0\\ 
3 & 1 & 0 & 2 & 0 & 1\\ 
2 & 1 & 1 & 1 & 0 & 0\\ 
 \end{bmatrix},
\] 
obtained from the Hilbert basis, after the Gaussian elimination, is reduced to
\[
\begin{bmatrix} 
1 &  0 & -1 & 0 & -1 & -1 \\
0 & 1 &  3 & 0 & 1 &  0\\
0 & 0 & 0 & 1 & 1&  2 \\
0 & 0 & 0 & 0 & 0 & 0\\ 
0 & 0 & 0 & 0 & 0 & 0\\ 
 \end{bmatrix}.
\]
Notice that columns 1, 2, and 4 have the leading terms and hence  the $\mathbb{Z}$-module $\mathcal{R}_\lambda$ is of rank 3 and is generated by
\[
\{ (0,0,0,3,2), (0,1,1,1,1), (1,0,0,2,1)\}.
\]

Since the number of generators of ${\mathcal R}_\lambda$ is 3, system \eqref{ex1_sys}
has 3 functionally  independent polynomial first integrals, which can be chosen to be $I_1,I_2,I_4$ .  The calculation of the reduced row echelon form of matrix $N$ was done in SAGE \cite{sage}, see Appendix B.

 \section{Invariants} \label{sec:inv}
%Set $\N_0 = \N\cup \{0\}$.  For $\alpha=(\alpha_1,\alpha_2,\dots,\alpha_n) \in \N_0^n$ we denote 		$|\alpha|=\alpha_1+\alpha_2+\dots+\alpha_n$ and $x^\alpha= x_1^{\alpha_1}  x_2^{\alpha_2} \cdots x_n^{\alpha_n}$.Throughout this paper {\it $x$ is a column vector} and  {\it $x^\alpha$ is a monomial. }
			
%(the $n$-th general linear group of $k$, $G=GL_n(k)$). 
%For a matrix $\mathfrak{A} \in G$, 
%and ${\bf x}=(x_1,x_2,\dots,x_n) \in k^n$, 
%let $\mathfrak{A} {\bf x}$ denote the usual action of $G$ on  $k^n$. 
%An invariant is \emph{irreducible} if it does not factor as a product of polynomials that are themselves invariants.

In this section, we study polynomial invariants of $n$-dimensional polynomial systems of ODEs under the action of a one-parameter group on the phase space of the system. 

Let   ${\bf N}_i=\{Q=(q_1, \dots, q_n)\in \Z^n : q_i\ge -1, q_j\ge 0 {\ \rm if \ } j\ne i     \}  $ and  ${\bf N}={\bf N}_1\cup \dots \cup {\bf N}_n  $.  Following Bruno \cite{Bru},  we can write any  $n$-dimensional analytical or formal system of ODEs in the form 
\be\label{sys_g}
\dot x = \sum_{Q\in {\bf N}}  (x\odot  a_Q) x^Q,
\ee
where the $\odot$ stands for the Hadamard product,
\be \label{aq}
a_Q=(a^{(Q)}_1, \dots,  a^{(Q)}_n )^\top \in \C^n. 
\ee
% $Q$ is an $n$-tuple with integer entrees such that all entrees of $(a_Q \odot  x) x^Q$are monomials of $\C[x_1, \dots, x_n]

 More specifically,  any  $n$-dimensional polynomial system can be  written as 
\be\label{sys_p}
\dot x =  \sum_{Q\in \Omega} (x \odot a_Q) x^Q,
\ee
where $\Omega $ is a  finite   set, say 
%of $Q$'s appearing in \eqref{sys_g}, say
$$
\Omega=\{ Q_1, \dots, Q_\ell\} \subseteq {\bf N}.
$$

The matrix $A={\rm diag}\, \lambda, $
where $\lambda =\left(\lambda_1,\lambda_2,\ldots,\lambda_n \right)$
 is the infinitesimal generator of the complex matrix group 
$e^{A\phi}$.
After the change of variables 
\be \label{Ax}
y=e^{A\phi}x ,
\ee
 we obtain from \eqref{sys_p} the  system
 $$
 \dot y=y\odot \sum_Q a_Q e^{-\la \lambda, Q\ra \phi } y^Q =  y\odot \sum_Q \widehat  a_Q  y^Q .
 $$
In particular, 
the coefficients are changed by the rule
\be\label{ch3d}
a_{Q}\mapsto  \widehat a_Q= a_{Q} e^{-\la \lambda, Q\ra \phi}, 
\ee
where, as above,  $\la \lambda, Q\ra$ represents the usual inner product of $n$-tuples $\lambda$ and $Q$.

Let  $\mathfrak{m}=n  \ell$ be  the number of parameters $a_i^{(Q)}$ in \eqref{sys_p}.
Define the ordered $\mathfrak{m}$-tuple  of parameters of system \eqref{sys_p}
%in ${\mathbb C}^{n\ell}$ 
as
\begin{equation} \label{Apar}
{a}=\left( a_{Q_1}^\top,\ldots, a_{Q_\ell}^\top\right)= \left( a_1^{(Q_1)}, \dots, a_n^{(Q_1)},\dots, a_1^{(Q_\ell)},\dots, a_n^{(Q_\ell)}
  \right)
\end{equation}
and consider the algebra of complex polynomials $\C[ a]$, where the variables 
%$$:= {\mathbb C}[a^{(1)},a^{(2)},\ldots, a^{(n)}]$ where 
%$a^{(j)}=\{a_{T_n^{j-1}({\bf p}^{(k)})}^{(j)}: k=1,2,\ldots,\ell\}$, $j=1,2,\ldots, n$ 
are the parameters of  system \eqref{sys_p}.
Let 
$$
\nu(i)=(\nu^{(i)}_1, \dots ,  \nu^{(i)}_n), \quad i=1,\dots, \ell
$$
and $$
\nu=(\nu(1), \dots, \nu(\ell)).
$$

For $a_{Q}$ defined by \eqref{aq} and $\alpha=(\alpha_1,\dots, \alpha_n)$,  let 
$$
\left( a^{(Q)}\right)^\alpha:=\left( a_1^{(Q)}\right)^{\alpha_1}  \left( a_2^{(Q)}\right)^{\alpha_2} \cdots  \left( a_n^{(Q)}\right)^{\alpha_n}. 
$$
Given  the ordered %$n\ell$ 
$\mathfrak m$-tuple ${a}$,  
%and $$\nu=\left( \nu^{(1)}_1,\ldots,\nu^{(1)}_{\mathfrak{m}_1},\ldots,\nu^{(k)}_1,\ldots, \nu^{(k)}_{\mathfrak{m}_k},\ldots,\nu^{(n)}_{1},\ldots,\nu^{(n)}_{\mathfrak{m}_n}\right)\in {\mathbb N}_0^{\mathfrak{m}},$$ 
the  monomial $a^\nu$ in ${\mathbb C}[a]$ is  defined by 
\begin{equation*}
a^\nu= \prod_{i=1}^\ell a_{Q_i}^{\nu(i)}.
\end{equation*}
The change of variables \eqref{Ax} (the group action \eqref{ch3d}) induces a $\mathbb C$-linear map on ${\mathbb C}[a]$ acting on monomial $a^\nu$ as 
\be \label{ant}
a^\nu \mapsto \prod_{i=1}^\ell \widehat{ a}_{Q_i}^{\nu(i)} = \prod_{i=1}^\ell { a}_{Q_i}^{\nu(i)} e^{-\sum_{k=1}^n\la \lambda, Q_i\ra   \nu_k^{(i)} } = \prod_{i=1}^\ell { a}_{Q_i}^{\nu(i)} e^{-\la \lambda, \mathcal{Q}_i 
\nu(i)^\top \ra }   ,
\ee
where
$\mathcal{Q}_i$ is $n\times  n $ matrix whose each column is $Q_i.$ Let $\mathcal{Q}$
be the $n\times \mathfrak{m}$
matrix 
$$
\mathcal{Q}=[\mathcal{Q}_1\ \mathcal{Q}_2\ \cdots \mathcal{Q}_\ell]  
%\mathcal{Q}= ( \mathcal{Q}_1, \dots, \mathcal{Q}_\ell). 
$$

and let 
$$
L: \N_0^\mathfrak{m} \to \C^n
$$
be the map
\begin{equation} \label{defL}
L(\nu) := \left(L^1(\nu),L^2(\nu),\ldots,L^n(\nu)\right)^\top
=\mathcal{Q} \nu^\top.
\end{equation}
%be the map defined by 
%\begin{equation*}
%L^i(\nu)=\sum_{k=1}^{n} \left\langle p_i^{(k)},\nu^{(k)} \right\rangle \ \text{ for all } i=1,2,\ldots,n.\end{equation*}
%It is easy to see that $L$ is an additive map.
%Using  the notation of map $L$ given by \eqref{defL} and \eqref{L^j}, equation \eqref{Lmap} can be written as inner product, that is, 
%\be \label{Lmap1}{\bf \Sigma}(\nu)= L(\nu)\cdot \bar{\zeta}. 
        %\sum_{k=1}^{\ell} \sum_{j=1}^{n} (T^j(\widetilde{\bf{p}}^{(k)})\cdot \nu^{\{k\}})\,\zeta^{j-1}.
        %\ \ \text{ for every }\ j=1,2,\ldots, n.\ee 

Using  additive map \eqref{defL}, we  define the monoid
\begin{equation} \label{defM}
\mathcal M_L= \left \{ \nu\in {\mathbb N}_0^{\mathfrak{m}} \ :\ \la \lambda, L(\nu) \ra =  0 \right \},
\end{equation}
which is finitely generated by the same argument as $\mathcal{M}_\lambda$ introduced above. 
%\begin{eqnarray} 
%    L^1(\nu)=L^2(\nu)=\ldots =L^n(\nu)\in {\mathbb N}_0.
%\end{eqnarray}
%It is easy to see that $\mathcal M$ is a submonoid of ${\mathbb N}_0^{\n}$. 

\begin{thm} \label{THMinvarM}
A monomial $a^\nu $ is an invariant of  group \eqref{ch3d} if and only if 
$\nu \in {\mc M}_L$. 
\end{thm}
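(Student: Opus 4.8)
The plan is to unwind the definition of an invariant for the induced $\mathbb{C}$-linear map on $\mathbb{C}[a]$ given by \eqref{ant}, and reduce the invariance condition to a statement about the exponent vector $\nu$. First I would recall that under the group action \eqref{ch3d}, the monomial $a^\nu$ is sent to
\[
a^\nu \longmapsto a^\nu \, e^{-\sum_{i=1}^\ell \la \lambda, \mathcal{Q}_i \nu(i)^\top\ra},
\]
so $a^\nu$ is fixed for every value of the group parameter $\phi$ if and only if the scalar factor $e^{-\phi \sum_{i=1}^\ell \la \lambda, \mathcal{Q}_i\nu(i)^\top\ra}$ equals $1$ for all $\phi \in \mathbb{C}$ (one should be a little careful that the exponent appearing in \eqref{ant} is linear in $\phi$, which is exactly what \eqref{ch3d} gives). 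An exponential $e^{c\phi}$ is identically $1$ in $\phi$ precisely when $c = 0$, so the invariance of $a^\nu$ is equivalent to
\[
\sum_{i=1}^\ell \la \lambda, \mathcal{Q}_i \nu(i)^\top\ra = 0.
\]

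Next I would rewrite the left-hand side using the block structure of $\mathcal{Q}$. By the definition \eqref{defL} of $L$ and the fact that $\mathcal{Q} = [\mathcal{Q}_1\ \mathcal{Q}_2\ \cdots\ \mathcal{Q}_\ell]$ while $\nu = (\nu(1),\dots,\nu(\ell))$, block multiplication gives
\[
L(\nu) = \mathcal{Q}\nu^\top = \sum_{i=1}^\ell \mathcal{Q}_i \nu(i)^\top,
\]
and hence, by linearity of the inner product in its second argument,
\[
\la \lambda, L(\nu)\ra = \Big\la \lambda, \sum_{i=1}^\ell \mathcal{Q}_i\nu(i)^\top\Big\ra = \sum_{i=1}^\ell \la \lambda, \mathcal{Q}_i\nu(i)^\top\ra.
\]
Combining this with the previous paragraph, $a^\nu$ is an invariant of \eqref{ch3d} if and only if $\la \lambda, L(\nu)\ra = 0$, which by the definition \eqref{defM} of $\mathcal{M}_L$ (together with $\nu \in \mathbb{N}_0^{\mathfrak{m}}$, automatic since $a^\nu$ is a genuine monomial) is exactly the condition $\nu \in \mathcal{M}_L$.

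The argument is essentially a bookkeeping computation, so I do not anticipate a deep obstacle; the only point that needs genuine care is the first reduction — justifying that the linear map \eqref{ant} fixes $a^\nu$ for \emph{every} group element $e^{A\phi}$ if and only if the exponent coefficient vanishes. This requires noting that the group is the one-parameter family $\{e^{A\phi} : \phi \in \mathbb{C}\}$ (or $\phi$ ranging over $\mathbb{R}$), so that ``$f(x) = f(\mathfrak{g}x)$ for all $\mathfrak{g}$'' unpacks to an identity in $\phi$, and that a nonzero complex exponential in $\phi$ is never identically $1$. One might also remark that it suffices to check the condition on monomials because the action is $\mathbb{C}$-linear and permutes (up to scalars) the monomial basis of $\mathbb{C}[a]$, so a polynomial is invariant iff each of its monomials is — though for the statement as written, which concerns a single monomial $a^\nu$, this last remark is not strictly needed.
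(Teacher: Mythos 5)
Your proposal is correct and follows essentially the same route as the paper: unwind \eqref{ant}, observe that invariance forces $\sum_{i=1}^\ell \la \lambda, \mathcal{Q}_i\nu(i)^\top\ra=0$, and identify this sum with $\la\lambda, L(\nu)\ra=\la\lambda,\mathcal{Q}\nu^\top\ra$ by block multiplication and linearity. The only difference is that you spell out the step the paper leaves implicit (that $e^{c\phi}\equiv 1$ for all $\phi$ forces $c=0$), which is a welcome clarification rather than a deviation.
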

\begin{proof}
From \eqref{ant}, we see that the monomial 
$a^\nu$  is an invariant of \eqref{ch3d} if and only if 
$$
\sum_{i=1}^\ell \la    \lambda, \mathcal{Q}_i 
\nu(i)^\top \ra =0.
$$
But, the above equality is equivalent 
$$
\sum_{i=1}^\ell \la    \lambda, \mathcal{Q}_i 
\nu(i)^\top \ra = \la \lambda, \sum_{i=1}^\ell \mathcal{Q}_i \nu(i)^\top   \ra  = \la \lambda, \mathcal{Q}\nu^\top\ra  = \la \lambda, L(\nu) \ra=   0.
$$
 \end{proof}

The following proposition can easily be verified by straightforward computations. 
\begin{pro}\label{pro:1}
    For $\nu \in {\mathbb N}_0^{\mathfrak{m}}$, 
     $\nu \in {\mc M}_L$ if and only if the monomial $a^\nu$ is a first integral of the linear system 
    $$
    \dot a= \mathcal{L} a,
    $$
    where  
    $$
    \mathcal{L}={\rm  diag}(\lambda^\top \mathcal{Q}).
    $$
\end{pro}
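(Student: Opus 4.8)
The proof is a direct computation resting on one elementary observation, already implicit in Section~\ref{sec_DioEqu}: for a diagonal linear system $\dot a = D a$ with $D = \mathrm{diag}(d_1,\dots,d_{\mathfrak m})$ and $e_j$ the $j$-th standard basis vector of $\Z^{\mathfrak m}$, differentiating a monomial along solutions gives
\[
\frac{d}{dt}\,a^\nu \;=\; \sum_{j=1}^{\mathfrak m}\nu_j\,a^{\nu-e_j}\,\dot a_j \;=\; \Bigl(\sum_{j=1}^{\mathfrak m}\nu_j d_j\Bigr)a^\nu \;=\; \la \nu, d\ra\,a^\nu ,
\]
so $a^\nu$ is a first integral of $\dot a = D a$ if and only if $\la \nu, d\ra = 0$ (this is exactly condition \eqref{dio} applied to the system $\dot a = Da$). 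The plan is to apply this with $D = \mathcal{L} = \mathrm{diag}(\lambda^\top\mathcal Q)$, whose vector of diagonal entries is $\mathcal Q^\top\lambda$, and then to identify the resulting scalar condition $\la \nu, \mathcal Q^\top\lambda\ra = 0$ with membership in $\mathcal M_L$.

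For that identification I would simply move $\mathcal Q^\top$ across the inner product: using the adjunction $\la u, M^\top v\ra = \la M u, v\ra$ together with the definition \eqref{defL} of $L$ and the symmetry of the inner product,
\[
\la \nu,\,\mathcal Q^\top\lambda\ra \;=\; \la \mathcal Q\nu^\top,\,\lambda\ra \;=\; \la L(\nu),\,\lambda\ra \;=\; \la \lambda,\,L(\nu)\ra .
\]
Combining the two displays, $a^\nu$ is a first integral of $\dot a = \mathcal L a$ if and only if $\la \lambda, L(\nu)\ra = 0$, which by \eqref{defM} is precisely the statement $\nu\in\mathcal M_L$. This yields both implications at once.

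There is no genuine obstacle here — this is the ``straightforward computation'' referred to in the statement — but the single point demanding care is bookkeeping with the transpose/row--column conventions: in \eqref{defL} the tuple $\nu$ is written as a row and acted on as $\mathcal Q\nu^\top$, the diagonal of $\mathcal L$ comes from the row vector $\lambda^\top\mathcal Q$, and $\lambda$ is treated as a column vector in \eqref{Adiag}; a single misplaced transpose is the only way the argument could go wrong, so I would state the adjunction identity explicitly rather than leave it to the reader. As a sanity check (and an alternative route) one may note that the one-parameter group \eqref{ch3d}, written out on the full parameter vector $a$, multiplies the coordinate of $a$ lying in block $i$ by $e^{-\la\lambda,Q_i\ra\phi}=e^{-\phi\,\mathcal L_{jj}}$, i.e.\ it is exactly the flow $e^{-\phi\mathcal L}$ of $\dot a=-\mathcal L a$; hence invariants of the group coincide with first integrals of $\dot a=\mathcal L a$, which together with Theorem~\ref{THMinvarM} recovers the proposition.
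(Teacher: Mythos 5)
Your proof is correct and is exactly the ``straightforward computation'' the paper leaves to the reader: differentiating $a^\nu$ along the diagonal flow gives $\tfrac{d}{dt}a^\nu=\la\nu,\mathcal Q^\top\lambda\ra a^\nu$, and the transpose identity $\la\nu,\mathcal Q^\top\lambda\ra=\la\lambda,L(\nu)\ra$ turns the vanishing condition into membership in $\mathcal M_L$ as defined in \eqref{defM}. Since the paper supplies no written argument for Proposition~\ref{pro:1}, your write-up simply makes the intended verification explicit, and the careful handling of the row/column conventions is the right point to spell out.
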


From Lemma \ref{lem:LPW} and Proposition \ref{pro:1},
we conclude that $\mathcal{M}_L$ has a Hilbert basis $H_L$, which can be computed using Algorithm 1 above. 
This algorithm works in the general case where the $\lambda_i$ are algebraic elements of $\C$, and is not limited to the case where all $\lambda_i$  are rational numbers.  In the latter simple case, we may use Algorithm 1.4.5 from \cite{St-AIT}.

\begin{exstar} \label{ex2}
Consider the following differential system, written in the form \eqref{sys_p},
 with linear part as in Example \ref{ex1},
\begin{equation*}
\dot x = x\odot \lambda + x\odot a_{Q_1}\, x^{Q_1}+x\odot a_{Q_2}\, x^{Q_2}+x\odot a_{Q_3}\, x^{Q_3},
\end{equation*}
where  $\lambda = \left(1, \zeta,\zeta^2,-2,3 \right)$  and  $Q_1=(1,0,0,0,1), Q_2=(0,1,1,0,0)$, and $Q_3=(1,0,0,1,1)$.
In particular, the complex vectors of coefficients are
\begin{eqnarray*}
    a_{Q_1} &= (a_{10001}, b_{10001},  c_{10001}, d_{10001},  f_{10001}), \\
   a_{Q_2} &= (a_{01100},b_{01100},  c_{01100}, d_{01100},  f_{01100}), \\
    a_{Q_3} &= (a_{10011},b_{10011},  c_{10011}, d_{10011},  f_{10011}).
\end{eqnarray*}

Following the above notations, let $\mathcal{Q}=[\mathcal{Q}_1\ \mathcal{Q}_2\ \mathcal{Q}_3]$ be the $5\times 15$ matrix
\[
\mathcal{Q} = 
\left[
\begin{array}{ccccccccccccccc}
1 & 1 & 1 & 1 & 1 & 0 & 0 & 0 & 0 & 0 & 1 & 1 & 1 & 1 & 1 \\
0 & 0 & 0 & 0 & 0 & 1 & 1 & 1 & 1 & 1 & 0 & 0 & 0 & 0 & 0 \\
0 & 0 & 0 & 0 & 0 & 1 & 1 & 1 & 1 & 1 & 0 & 0 & 0 & 0 & 0 \\
0 & 0 & 0 & 0 & 0 & 0 & 0 & 0 & 0 & 0 & 1 & 1 & 1 & 1 & 1 \\
1 & 1 & 1 & 1 & 1 & 0 & 0 & 0 & 0 & 0 & 1 & 1 & 1 & 1 & 1 \\
\end{array}
\right]
\]
and
\[
\lambda^\top \mathcal{Q}=
(4 , 4 , 4 , 4 , 4 , -1 , -1 , -1 , -1 , -1 , 2 , 2 , 2 , 2 , 2).
\]
To obtain the Hilbert basis for the monoid $\mathcal M_L$ of solutions $\nu \in \N_0^{15}$, we use  Algorithm \ref{alg1} with the input vector $\lambda^\top \mathcal{Q}$. It turned out that the basis has 425 vectors, which are the exponent vectors of the monomials returned by the algorithm. Since the list is rather long, we do not present it in the paper, but we present the computations for the particular case  when 
\[
b_{10001}=d_{10001}=c_{01100}=d_{01100}=a_{10011}=b_{10011}=d_{10011}=0
.\]
In this case, 211 binomials are  generated by the algorithm. Only 64 binomials are  of the form  $\xb^\nu - \yb^\nu $. The leading terms of the binomials are  
given in Appendix C. Using them, we obtain the Hilbert basis of $\mathcal{M}_L$, which consists of the 64 vectors in $\N_0^{15}$. In Appendix D, we present a Macaulay2 code used to compute the Hilbert basis of $\mathcal M_L$.
 Using this Hilbert basis, we get the following invariants of group action \eqref{ch3d} on our system:
\begin{eqnarray*}
&f_{01100}^2f_{10011}, 
f_{01100}^2c_{10011}, 
d_{01100}, 
b_{01100}f_{01100}f_{10011}, 
b_{01100}f_{01100}c_{10011},\\ 
&b_{01100}^2 f_{10011}, 
b_{01100}^2c_{10011},
a_{01100}f_{01100}f_{10011}, 
a_{01100}f_{01100}c_{10011}, 
a_{01100}b_{01100}f_{10011}, 
a_{01100}b_{01100}c_{10011}, \\
&a_{01100}^2 f_{10011}, 
a_{01100}^2c_{10011}, 
f_{10001}f_{01100}^4, 
f_{10001}b_{01100}f_{01100}^3, 
f_{10001}b_{01100}^2f_{01100}^2, 
f_{10001}b_{01100}^3f_{01100},\\ 
& f_{10001}b_{01100}^4, 
f_{10001}a_{01100}f_{01100}^3, 
f_{10001}a_{01100}b_{01100}f_{01100}^2, 
f_{10001}a_{01100}b_{01100}^2f_{01100}, 
f_{10001}a_{01100}b_{01100}^3, \\
&f_{10001}a_{01100}^2f_{01100}^2, 
f_{10001}a_{01100}^2b_{01100}f_{01100},
f_{10001}a_{01100}^2b_{01100}^2, 
f_{10001}a_{01100}^3f_{01100}, 
f_{10001}a_{01100}^3b_{01100},\\ 
&f_{10001}a_{01100}^4,
c_{10001}f_{01100}^4, 
c_{10001}b_{01100}f_{01100}^3, 
c_{10001}b_{01100}^2f_{01100}^2, 
c_{10001}b_{01100}^3f_{01100}, 
c_{10001}b_{01100}^4, \\
&c_{10001}a_{01100}f_{01100}^3, 
c_{10001}a_{01100}b_{01100}f_{01100}^2, 
c_{10001}a_{01100}b_{01100}^2f_{01100}, 
c_{10001}a_{01100}b_{01100}^3, \\
&c_{10001}a_{01100}^2f_{01100}^2, 
c_{10001}a_{01100}^2b_{01100}f_{01100}, 
c_{10001}a_{01100}^2b_{01100}^2, 
c_{10001}a_{01100}^3f_{01100}, 
c_{10001}a_{01100}^3b_{01100},\\ 
&c_{10001}a_{01100}^4, 
a_{10001}f_{01100}^4, 
a_{10001}b_{01100}f_{01100}^3, 
a_{10001}b_{01100}^2f_{01100}^2, 
a_{10001}b_{01100}^3f_{01100}, 
a_{10001}b_{01100}^4, \\
&a_{10001}a_{01100}f_{01100}^3, 
a_{10001}a_{01100}b_{01100}f_{01100}^2, 
a_{10001}a_{01100}b_{01100}^2f_{01100}, 
a_{10001}a_{01100}b_{01100}^3,\\ 
&a_{10001}a_{01100}^2f_{01100}^2,
a_{10001}a_{01100}^2b_{01100}f_{01100}, 
a_{10001}a_{01100}^2b_{01100}^2,\\ 
&a_{10001}a_{01100}^3f_{01100}, 
a_{10001}a_{01100}^3b_{01100}, 
a_{10001}a_{01100}^4.
\end{eqnarray*}
\end{exstar}

%%%%%%%%%%%%%%%%%%%%%%%%%%%%%%%%%%%%%%%%%%%%%%%%%%%%%%%
\section{Normal forms}
\label{sec:nf}

In this section, we discuss the structure of  $C_0^{for}(A) $  and $C^{pol}(A)$, as defined by equations \eqref{ker}, and present an algorithm for computing 
%a generating set for 
$\nk$.
%\footnote{\textcolor{red}{VR: What do me mean here saying "generating set"?}\color{green}{AJ: By a ``generating set" I mean a set of elements of $\nk$ that generate the whole $\nk$ as done in Example 3., I am not sure I understand your question. Maybe it is better to just say an algorithm for computing $\nk$ an drop out ``generating set" in this sentence.}}

%Resonant terms (equivariants)  generate a submodule $M$ of $\C[[x]]^n$.
%\footnote{\textcolor{red}{VR: Is this correct?}\color{blue}{This is not ok.} \textcolor{red}{How you suggest to say?}\color{green}{AJ: Why do we need this sentence anyway? we care about $C^{pol}$ only, no? {\color{red} VR:  Yes. But what is $C^{pol}$? It is a $I(\lambda)$-module over WHAT? %In the other words, what algebraic structure is described by the last formula of page 12?

By \cite{KWZ,WalcherNF},  $C^{pol}$  is an $I(\lambda)$-module.
We have an algorithm to describe $I(\lambda)$, so we need to find only a generating set of this $I(\lambda)$-module.

As before, let $\lambda=(\lambda_1, \dots, \lambda_n)^\top $,   ${\bf N}_i=\{Q\in \Z^n : Q_i\ge -1, Q_j\ge 0 {\ \rm if \ } j\ne i     \}  $ and  $\bf N={\bf N}_1\cup \dots \cup {\bf N}_n  $. 
{To find a generating set of equivariants, we need to describe the sets
$$
\nk=\{  \alpha\in \N_0: \ \la \alpha, \lambda\ra -\lambda_k=0\}\quad k=1,\dots, n.
$$
Let 
$$
\widetilde \nk=\{  \beta\in {\bf N}_k: \ \la \beta, \lambda\ra =0\}\quad k=1,\dots, n
.$$
%and\footnote{\textcolor{teal}{Does $O_\lambda^{(k)}$ have a finite number of generators?}} $$O_\lambda^{(k)}= \widetilde \nk \setminus \mathcal{M}_\lambda, \quad  N_\lambda^{(k)} = \{\gamma+e_k, \ \gamma \in  O_\lambda^{(k)}\}. $$ 
The following statement is obvious.
\begin{pro} 
1) For any $k=1,\dots,n$  the monoid $\mathcal{M}_\lambda$ is a subset of $\widetilde \nk.$\\
2)  There is   one-to-one correspondence between the elements of $\nk$ and elements of  $\widetilde \nk$. 
%\\3) Each element $\alpha$ of $\nk$ can be written as $\alpha=\mu+\nu$, where
%\footnote{\textcolor{red}{Does   $N_\lambda^{(k)}$ have  a finite number of generators? }} $\mu\in \mathcal{M}_\lambda$, $\nu\in N_\lambda^{(k)}.$ 
\end{pro}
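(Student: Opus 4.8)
The plan is to prove both assertions by elementary direct verification; the only idea needed is translation by the $k$-th unit vector $e_k$, which converts the inhomogeneous resonance relation $\la \alpha,\lambda\ra = \lambda_k$ into the homogeneous one $\la \beta,\lambda\ra = 0$.

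For 1), I would take an arbitrary $\alpha \in \mathcal{M}_\lambda$, so that $\alpha \in \N_0^n$ and $\la\alpha,\lambda\ra = 0$ by the definition in \eqref{efre}. Nonnegativity of the entries gives at once $\alpha_k \ge 0 \ge -1$ and $\alpha_j \ge 0$ for $j \ne k$, so $\alpha \in {\bf N}_k$; together with $\la\alpha,\lambda\ra = 0$ this is exactly the defining condition of $\widetilde\nk$, whence $\mathcal{M}_\lambda \subseteq \widetilde\nk$.

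For 2), I would write down the explicit map $\phi\colon \nk \to \widetilde\nk$, $\phi(\alpha) = \alpha - e_k$, together with $\psi\colon \widetilde\nk \to \nk$, $\psi(\beta) = \beta + e_k$, and check that each is well defined. If $\alpha \in \nk$, then $\alpha \in \N_0^n$ and $\la\alpha,\lambda\ra = \lambda_k$; setting $\beta = \alpha - e_k$ gives $\beta_k = \alpha_k - 1 \ge -1$ and $\beta_j = \alpha_j \ge 0$ for $j \ne k$, hence $\beta \in {\bf N}_k$, while $\la\beta,\lambda\ra = \la\alpha,\lambda\ra - \lambda_k = 0$, so $\beta \in \widetilde\nk$. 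Conversely, if $\beta \in \widetilde\nk$, then $\beta \in {\bf N}_k$ and $\la\beta,\lambda\ra = 0$; setting $\alpha = \beta + e_k$ gives $\alpha_k = \beta_k + 1 \ge 0$ and $\alpha_j = \beta_j \ge 0$, so $\alpha \in \N_0^n$, while $\la\alpha,\lambda\ra - \lambda_k = \la\beta,\lambda\ra = 0$, so $\alpha \in \nk$. Since $\phi$ and $\psi$ are manifestly mutually inverse, they furnish the claimed one-to-one correspondence.

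No genuine obstacle is expected: the statement is essentially bookkeeping. The single point worth flagging is the behaviour of the $k$-th coordinate — when $\alpha_k = 0$ the translate $\alpha - e_k$ has $k$-th entry $-1$, which is exactly why the ambient set ${\bf N}_k$ is defined so as to permit the value $-1$ in that slot. Once this is noted, both parts are immediate.
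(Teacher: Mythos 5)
Your argument is correct and complete: part 1) is immediate from $\N_0^n \subseteq {\bf N}_k$, and the translation maps $\alpha \mapsto \alpha - e_k$ and $\beta \mapsto \beta + e_k$ are exactly the right mutually inverse bijections between $\nk$ and $\widetilde\nk$. The paper states this proposition without proof (calling it obvious), and your verification is precisely the intended one, including the relevant observation that the entry $-1$ allowed in the $k$-th slot of ${\bf N}_k$ is what accommodates the case $\alpha_k = 0$.
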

%\begin{proof}    The first and second  statements are  obvious.  The correctness of the third statement follows from results of \cite{LDE2}.\end{proof}

To describe  
%compute a minimal generating set of 
$\nk$, recall the $d \times n$ matrix $\mathfrak{A}$ from (\ref{matA}), which resulted from the equation  $\sum_{i=1}^n \alpha_i \lambda_i = 0$. To solve $\sum_{i=1}^n \alpha_i \lambda_i = \lambda_k$, we introduce a slack variable $\alpha_0$ and solve the following homogeneous equation  in $(\alpha_0, \alpha_1, \dots, \alpha_n) \in \mathbb{N}_0^{n+1}$:
\begin{equation}\label{alp0}
-\alpha_0\lambda_k + \sum_{i=1}^n \alpha_i \lambda_i = 0.
\end{equation}
This can be accomplished by applying Algorithm~\ref{alg1} to the following
$d \times (n+1)$ matrix:
\begin{equation}\label{matAplus}
\mathfrak{A}_k = [-\mathfrak{a}_k \,\, \mathfrak{a}_1 \,\, \mathfrak{a}_2 \, \cdots \,\, \mathfrak{a}_n],
\end{equation}
where $\mathfrak{a}_j$ denotes the $j$-th column of $\mathfrak{A}$  (\ref{matA}).

In the resulting Hilbert basis, the set of vectors where $\alpha_0 = 0$ is precisely the Hilbert basis $H_\lambda$ of $\mathcal{M}_\lambda$, while those vectors where $\alpha_0 = 1$ are solutions to $\sum_{i=1}^n \alpha_i \lambda_i = \lambda_k$. Let $S$ be the set of all such vectors. It is clear that any solution to equation (\ref{alp0}) is of the form $\nu +\mu$ where $\nu \in S$ and $\mu \in \mathcal{M}_\lambda$.

%The following algorithm
Algorithm 3 is an extension of Algorithm \ref{alg1}}, incorporating the procedure above and producing a minimal generating set for $\nk$.

 \begin{algorithm}[ht]
\KwIn{A vector $\lambda = (\lambda_1,\dots,\lambda_n) \in\C^n$ of algebraic elements} {
\KwOut{A minimal generating set for $\nk$}
}
\nl Find a number field $K$ containing all $\lambda_i$

\nl Choose a $\Q-$basis $\{b_1,\dots,b_d\}$ for $K$

\nl Express each $\lambda_i$ in the chosen basis: 
Write 
\[
\lambda_i = c_{1i}b_1 +c_{2i}b_2+ \cdots + c_{di}b_d,  \mbox{ where }c_{ji} \in \Q
\]

\nl Let $C= [-\mathfrak{c}_k \,\, \mathfrak{c}_1 \,\, \mathfrak{c}_2 \, \cdots \,\, \mathfrak{c}_n]$ be the $d\times (n+1)$-matrix of coefficients, where $\mathfrak{c}_{i}$ is the $d-$column vector  $\mathfrak{c}_{i} = 
 (c_{1i}, c_{2i}, \dots, c_{di})$

\nl  Convert to integer matrix: Let  $\mathfrak{A}_k= [-\mathfrak{a}_k \,\, \mathfrak{a}_1 \,\, \mathfrak{a}_2 \, \cdots \,\, \mathfrak{a}_n]$
 be the integer $d \times (n+1)$-matrix obtained from $\mathfrak{C}$ by clearing the denominators

\nl   Compute the reduced Gr\"obner basis $\mathcal G$ for the ideal 
$$
 \left \langle x_0 - z_0 \displaystyle\prod_{j=1}^d t_j^{-a_{jk}}, \, x_i -  z_i \displaystyle\prod_{j=1}^d t_j^{a_{ji}}\ : \ i=1,\dots, n\right\rangle
$$     
in ${k}[t_1,\ldots,t_d,t_1^{-1},\ldots,t_d^{-1},x_0, x_1,\dots,x_n, z_0, z_1,\ldots,z_n]$  with respect to $\succ$:
\[
\{t_1^\pm,\ldots,t_d^\pm\}\succ \{x_0, x_1,\ldots,x_n\}\succ \{z_0, z_1,\ldots,z_n\}.
\]

\nl  The Hilbert basis $H_\lambda$ of ${\mathcal M}_\lambda$ consists of all vectors $\mu$ such that $x_1^{\mu_1}\cdots x_n^{\mu_n} - z_1^{\mu_1}\cdots z_n^{\mu_n}$ appears in $\mathcal G$.

\nl  Let $S$ be the set of all vector $\nu$ such that $x_0x_1^{\nu_1}\cdots x_n^{\nu_n} - z_0z_1^{\nu_1}\cdots z_n^{\nu_n}$ appears in $\mathcal G$.

\nl Every vector in $\nk$ is of the form $\nu+\mu$ for some $\nu \in S$ and $\mu \in  {\mathcal M}_\lambda$.

\caption{\bf Minimal generating set  of $\nk$  \label{alg3}}
\end{algorithm}

\begin{exstar} \label{ex3}
Consider system \eqref{ex1_sys} from Example \ref{ex1}. Employing Algorithm \ref{alg3} as shown in Appendix E, results in the following:
\begin{eqnarray*}
    \mathcal{N}_\lambda^1 &=&\{\nu +\mu : \nu \in \{(1,0,0,0,0),(0,0,0,1,1),(0,2,2,0,1)\} \mbox{ and }
\mu \in \mathcal{M}_\lambda\} \\
\mathcal{N}_\lambda^2 &=&\{(0,1,0,0,0) +\mu : 
\mu \in \mathcal{M}_\lambda\} \\
\mathcal{N}_\lambda^3 &=&\{(0,0,1,0,0) +\mu : \mu \in \mathcal{M}_\lambda\} \\
\mathcal{N}_\lambda^4 &=&\{\nu +\mu : \nu \in \{(0,0,0,1,0),(0,2,2,0,0)\} \mbox{ and }
\mu \in \mathcal{M}_\lambda\} \\
\mathcal{N}_\lambda^5 &=&\{\nu +\mu : \nu \in \{(0,0,0,0,1),(3,0,0,0,0)\} \mbox{ and }
\mu \in \mathcal{M}_\lambda\} \\
\end{eqnarray*}
In particular, in addition to the obvious equivariants of \eqref{ex1_sys}:
$$
v_i=x_i e_i \quad i=1, \dots, 5, 
$$
there are four  additional equivariants:
$$
v_6= x_4 x_5 e_1, \quad v_7= x_2^2 x_3^2 e_4 \quad v_8=x_1^3 e_5, \quad v_9=x_2^2x_3^2 x_5 e_1 .
$$
Let $I_1, \dots, I_6$  be the monomials corresponding to the elements of $H_\lambda$
which are given in \eqref{genI}. Then every element of $C^{pol}(A)$ can be written as 
\begin{multline}\label{st_de}
x_1 e_1 f_1(I_1,\dots,I_6)+x_2 e_2 f_2(I_1,\dots,I_6)+ x_3 e_3 f_3(I_1,\dots,I_6)+  x_4 e_4 f_4(I_1,\dots,I_6) +  x_5 e_5 f_5(I_1,\dots,I_6)+\\x_4 x_5 e_1 f_6(I_1,\dots,I_6)+
x_2^2 x_3^2 e_4 f_7(I_1,\dots,I_6)+
x_1^3 e_5 f_8(I_1,\dots,I_6)+   
x_2^2x_3^2x_5 f_9(I_1,\dots,I_6),
\end{multline}
for some $f_1,\dots,f_9 \in \C[I_1,\dots, I_6]$.
However, it is easy to verify the following  syzygies:
\begin{eqnarray*}
I_4 v_6-I_1 v_1 &=&0,\quad I_5v_6- I_2v_1=0,  \quad I_6v_6- I_4v_1=0 \\
I_1v_7-I_2^2v_4 &=& 0, \quad I_2 v_7-I_3v_4=0,I_4v_7-I_2I_5v_4=0, \quad I_6v_7-I_5^2v_5=0\\
I_1v_8 -I_4I_6v_5 &=& 0 \quad I_2v_8-I_5I_6v_5=0, \quad I_3v_8-I_6^2v_5=0\\
I_1v_9-I_2^2v_1 &=& 0, \quad I_2v_9-I_3v_6=0, \quad I_4v_9-I_2^2v_1=0, \quad I_5v_9-I_3v_1=0 .
\end{eqnarray*}
Using those sysygies, we can write \eqref{st_de}
in a unique way in the  simpler form 
\begin{multline*}
 f_1(I_1,\dots,I_6) v_1 + f_2(I_1,\dots,I_6)v_2+  f_3(I_1,\dots,I_6)v_3+   f_4(I_1,\dots,I_6)v_4+\\ f_5(I_1,\dots,I_6) v_5+
 f_6(I_1,I_2,I_3) v_6+  f_7(I_3,I_5) v_7+ f_8(I_4,I_5,I_6) v_8+  f_9(I_3,I_6) v_9.
\end{multline*}
This leads to the following representation of $C^{pol}$:
\begin{multline*}
C^{pol}(A)={\mathbb C}[I_1,\dots,I_6]v_1\oplus {\mathbb C}[I_1,\dots,I_6]v_2\oplus {\mathbb C}[I_1,\dots,I_6]v_3\oplus {\mathbb C}[I_1,\dots,I_6]v_4 \oplus \\{\mathbb C}[I_1,\dots,I_6] v_5 \oplus
 {\mathbb C}[I_1,I_2,I_3] v_6\oplus  {\mathbb C}[I_3,I_5] v_7\oplus {\mathbb C}[I_4,I_5,I_6] v_8 \oplus {\mathbb C}[I_3,I_6] v_9.
\end{multline*}
By replacing the polynomial ring in the above formula with the ring of formal power series, we obtain a representation of $C^{for}_0$, which  
is called a {Stanley decomposition} of the normal form module \cite{Mur}.
\end{exstar}

Suppose that system \eqref{Asn} is in the normal form and system  \eqref{Als}
admits $p$ independent polynomial first integrals.
By Proposition 5 of \cite{LPW},  system \eqref{Asn}  admits $p$ independent formal first
integrals if and only if it admits every polynomial first integral of  \eqref{Als}. The following statement provides a slight extension of this result. 

\begin{pro} \label{pr4} Let $\phi: \Z^n\to \C  $ be defined by $\phi(x)=\la \lambda, x\ra. $ Suppose that 
%all eigenvalues of $A$ are algebraic numbers over $\C$, 
\be \label{rfr}
{\rm rank} \, \ker \phi={\rm rank}\, \mathcal{R}_\lambda=p,
\ee
and  system \eqref{Asn}  is in the normal form.  Then system \eqref{Asn} 
admits $p$ functionally independent formal  first integrals if and only if it admits every  monomial first integral of  \eqref{Als} in the ring of Laurent polynomials.
\end{pro}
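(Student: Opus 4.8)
The plan is to derive Proposition~\ref{pr4} from Proposition~5 of \cite{LPW} by inserting a single extra step that uses the rank hypothesis \eqref{rfr}. Throughout I will use the convention implicit in the cited statement, namely that system \eqref{Asn} \emph{admits} a first integral $f$ of the linear system \eqref{Als} precisely when $f$ is also a first integral of \eqref{Asn}. With this reading, Proposition~5 of \cite{LPW} says that, for \eqref{Asn} in normal form, \eqref{Asn} has $p$ independent formal first integrals if and only if it admits every \emph{polynomial} first integral of \eqref{Als}; here $p={\rm rank}\,\mathcal{R}_\lambda$ equals, by Lemma~\ref{lem:LPW}, the number of functionally independent polynomial first integrals of \eqref{Als}, and under \eqref{rfr} it coincides with the $p$ in the statement. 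So the task reduces to proving the equivalence: \emph{under \eqref{rfr}, system \eqref{Asn} admits every polynomial first integral of \eqref{Als} if and only if it admits every Laurent-monomial first integral of \eqref{Als}.}

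First I would dispose of the easy direction, which needs no hypothesis. The Laurent-monomial first integrals of \eqref{Als} are exactly the monomials $x^\gamma$ with $\gamma\in K_\rho=\ker\phi$, and these include all $x^\alpha$ with $\alpha\in\mathcal{M}_\lambda$. Since the algebra of polynomial first integrals of \eqref{Als} is generated over $\C$ by the monomials $x^\alpha$, $\alpha\in H_\lambda\subseteq\mathcal{M}_\lambda$, and since any polynomial expression in first integrals of \eqref{Asn} is again a first integral of \eqref{Asn}, a system admitting every Laurent-monomial first integral of \eqref{Als} also admits every polynomial one.

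For the converse I would invoke Proposition~\ref{pro:1n}: since ${\rm rank}\,\ker\phi={\rm rank}\,\mathcal{R}_\lambda=p$, the group $\ker\phi=K_\rho$ is generated by $p$ linearly independent vectors $\alpha_1,\dots,\alpha_p\in H_\lambda$. Each $x^{\alpha_i}$ is a polynomial first integral of \eqref{Als}, hence by assumption a first integral of \eqref{Asn}, and therefore so is the Laurent monomial $x^{-\alpha_i}=(x^{\alpha_i})^{-1}$, because the reciprocal of a monomial first integral is again a first integral. For arbitrary $\gamma\in\ker\phi$, writing $\gamma=\sum_i m_i\alpha_i$ with $m_i\in\Z$ gives $x^\gamma=\prod_i (x^{\alpha_i})^{m_i}$, a product of first integrals of \eqref{Asn} and hence a first integral of \eqref{Asn}; thus every Laurent-monomial first integral of \eqref{Als} is admitted, which closes the equivalence and the proof.

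The hard part — essentially the only substantive point — is precisely this converse step: it is the sole place where the rank hypothesis \eqref{rfr} is used, and conceptually it amounts to the fact that one needs $\ker\phi$ to possess a $\Z$-basis consisting of exponent vectors of genuine polynomial monomial first integrals (vectors lying in $\N_0^n$, not merely in $\Z^n$), which is exactly what can fail when ${\rm rank}\,\mathcal{R}_\lambda<{\rm rank}\,\ker\phi$; Proposition~\ref{pro:1n} is what supplies such a basis inside $H_\lambda$. The remaining steps are routine: one checks that products and reciprocals of (monomial) first integrals of \eqref{Asn} stay first integrals in the relevant ring of formal Laurent series, which follows from $\tfrac{d}{dt}(fg)=\dot f\,g+f\dot g$ and $\tfrac{d}{dt}f^{-1}=-f^{-2}\dot f$, and that functional independence of the $p$ monomials $x^{\alpha_1},\dots,x^{\alpha_p}$ is immediate from the linear independence of $\alpha_1,\dots,\alpha_p$.
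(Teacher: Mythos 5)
Your proposal is correct and follows essentially the same route as the paper: the substantive step in both is exactly the use of \eqref{rfr} (via Proposition \ref{pro:1n}, or equivalently the paper's remark that every solution of \eqref{dio} is a $\Z$-combination of elements of $H_\lambda$) to span $\ker\phi$ by Hilbert-basis exponents, combined with Proposition~5 of \cite{LPW}. The only difference is organizational: the paper establishes the direction ``admits every Laurent monomial first integral $\Rightarrow$ $p$ independent formal first integrals'' directly by exhibiting $p$ linearly independent exponents in $\mathcal{M}_\lambda$, whereas you route that direction through \cite{LPW} as well, which is equally valid.
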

\begin{proof} First we note that the Laurent monomials $x^{\alpha_1}, \dots, x^{\alpha_m}$ are functionally independent if and only if their exponents  $\alpha_1,\dots,\alpha_m$ are
linearly independent over $\C$. Assume system \eqref{Asn}, which is in the normal form, admits every Laurent  monomial first integral of \eqref{Als}. 
%Then it admits  $p$ functionally independent  Laurent monomial first integrals. 
Since ${\rm rank}\,\mathcal{R}_\lambda=p $, there are $\alpha_1, \dots, \alpha_p\in \N_0^n$, which are linearly independent over $\Z$.  Then $\alpha_1, \dots, \alpha_p $ are also linearly independent over $\C$.
%{\color{red}{\footnote{\color{red}{Abdul and Mateja, is this correct? Do we need a proof?\color{green}{it is correct and I see no need to prove it}}}}} 
Therefore, $x^{\alpha_1}, \dots, x^{\alpha_p}$ are functionally independent  monomial first  integrals, which can also be considered as formal first integrals. 

Suppose now that system \eqref{Asn}, which is in the normal form, admits $p$ formal first integrals. Then \eqref{Als} admits $p$ polynomial first integrals.  
By Proposition 5 of \cite{LPW},
Since ${\rm rank}\, \mathcal{R}_\lambda=p$,  system \eqref{Asn} admits all monomial first integrals of \eqref{Als}. 
%Therefore by Proposition \ref{pro:1} and Lemma \ref{lem:u} 
By \eqref{rfr},
every solution $\alpha$ to \eqref{dio} is a $\Z$-linear combination of elements of $\mathcal{H}_\lambda$, yielding that the corresponding $x^\alpha$ is a Laurent monomial first integral of \eqref{Asn} (which is in the normal form).
\end{proof}

In general, checking if \eqref{rfr} holds can be a difficult problem. However, in the case where all eigenvalues of $A$ are algebraic elements over {$\Q$},  both $ \mathcal{R}_\lambda$ and its rank can be computed using Algorithm 2. Instead of computing the rank of
$\ker \phi$, we can easily compute the rank of $K_\rho $ defined by \eqref{kr}. In particular, for system \eqref{ex1_sys} of  Example \ref{ex1}, we have  ${\rm rank} \,  K_\rho={\rm rank}\, \mathcal{R}_\lambda=3$. Therefore, by Proposition \ref{pr4}, if  a normal form of an analytic or formal system with the linear part \eqref{ex1_sys} admits 3 independent first integrals, then it admits every Laurent monomial first integral of system \eqref{ex1_sys}.

%{\color{red} Quiestion. Expressions on the right hand side of \eqref{eq:F} can be considered as formal  power series (or polynomials) whose coefficients are elements of $\C^n$. What is this object from algebraic point of view? }

\section{Conclusions}

We have proposed an algorithm to compute 
the generating set of the algebra of polynomial first integrals of system \eqref{Als}, and 
we have adapted it to compute the generating set of polynomial   invariants of system \eqref{sys_g} 
under action \eqref{Ax}. Furthermore, we applied this approach to study the structure of the Poincar\'e-Dulac normal form of system \eqref{Asn}.
An interesting and  important related  problem is to study the connection of $\nk$ to the  Diophantine hull of $A$ \cite{St}.

\section*{Acknowledgments}	The first and the third  authors are supported by the Slovenian Research and Innovation  Agency (core research programs P1-0288 and P1-0306, respectively) and  by the project  101183111-DSYREKI-HORIZON-MSCA-2023-SE-01 “Dynamical Systems and Reaction Kinetics Networks”.

\section*{Appendix A}
Computations for the Hilbert basis $H_{\lambda}$ and a generating set of $I(\lambda)$ in Example \ref{ex1} were done in Macaulay2 using Algorithm \ref{alg1}. Note that for the Hilbert basis only resulting binomials of the form $\bf{x}^{\nu}-\bf{y}^{\nu}$ are considered.  

\begin{verbatim}
Macaulay2, version 1.24.11-1695-gf35df1017f (vanilla)
with packages:ConwayPolynomials,Elimination,IntegralClosure,InverseSystems,..

K = ZZ/31991; 
K[n,t,x_1..x_5,y_1..y_5, MonomialOrder => Lex];
h = ideal (x_1-y_1*n, x_2-y_2*t,n*t*x_3-y_3,n^2*x_4-y_4,x_5-y_5*n^3);
H = gens gb h
Q = entries H
toString Q

{{x_4^3*x_5^2-y_4^3*y_5^2, x_2*x_3*y_4^2*y_5-x_4^2*x_5*y_2*y_3, 
x_2*x_3*x_4*x_5-y_2*y_3*y_4*y_5, x_2^2*x_3^2*y_4-x_4*y_2^2*y_3^2, 
x_2^3*x_3^3*x_5-y_2^3*y_3^3*y_5, x_1*y_4*y_5-x_4*x_5*y_1, 
x_1*y_2^2*y_3^2*y_5-x_2^2*x_3^2*x_5*y_1, x_1*x_4*y_2*y_3-x_2*x_3*y_1*y_4, 
x_1*x_4^2*x_5-y_1*y_4^2*y_5, x_1*x_2*x_3-y_1*y_2*y_3, 
x_1^2*y_2*y_3*y_5-x_2*x_3*x_5*y_1^2, x_1^2*x_4-y_1^2*y_4, x_1^3*y_5-x_5*y_1^3, 
t*y_2-x_2, t*x_3*y_4^2*y_5-x_4^2*x_5*y_3, t*x_3*y_1*y_4-x_1*x_4*y_3, 
t*x_3*x_5*y_1^2-x_1^2*y_3*y_5, t*x_3*x_4*x_5-y_3*y_4*y_5, 
t*x_2*x_3^2*y_4-x_4*y_2*y_3^2, t*x_2*x_3^2*x_5*y_1-x_1*y_2*y_3^2*y_5, 
t*x_2^2*x_3^3*x_5-y_2^2*y_3^3*y_5, t*x_1*x_3-y_1*y_3, t^2*x_3^2*y_4-x_4*y_3^2, 
t^2*x_3^2*x_5*y_1-x_1*y_3^2*y_5, t^2*x_2*x_3^3*x_5-y_2*y_3^3*y_5, 
t^3*x_3^3*x_5-y_3^3*y_5, n*y_4*y_5-x_4*x_5, n*y_3^2*y_5-t^2*x_3^2*x_5, 
n*y_1-x_1, n*x_4*y_3-t*x_3*y_4, n*x_4^2*x_5-y_4^2*y_5, n*x_2*x_3-y_2*y_3, 
n*x_1*y_3*y_5-t*x_3*x_5*y_1, n*x_1*x_4-y_1*y_4, n*x_1^2*y_5-x_5*y_1^2, 
n*t*x_3-y_3, n^2*y_3*y_5-t*x_3*x_5, n^2*x_4-y_4, n^2*x_1*y_5-x_5*y_1, 
n^3*y_5-x_5}} 

\end{verbatim}

\section*{Appendix B}

To obtain the generating set of the $\mathbb Z$-module ${\mathcal R}_{\lambda}$ from Example \ref{ex1} Algorithm \ref{alg2} is used. The calculations of the reduced row echelon form of matrix $N$, that is needed in the algorithm, were done in SAGE and are presented below.   

\begin{verbatim}
N = matrix(ZZ,[[0,0,0,3,2],[0,1,1,1,1],[0,3,3,0,1],[1,0,0,2,1],[1,1,1,0,0],
[2,0,0,1,0]])
N.column_space()

Free module of degree 6 and rank 3 over Integer Ring
Echelon basis matrix:
[ 1  0 -1  0 -1 -1]
[ 0  1  3  0  1  0]
[ 0  0  0  1  1  2]
\end{verbatim}

\section{Appendix C}
Below is the list of monomials of Example \ref{ex2} when 
\[
b_{10001}=d_{10001}=c_{01100}=d_{01100}=a_{10011}=b_{10011}=d_{10011}=0.
\]
In this case, 
\[
\lambda^\top \mathcal{Q}=
(4 , 0 , 4 , 0 , 4 , -1 , -1 , 0 , 0 , -1 , 0 , 0 , 2 , 0 , 2).
\]
First, using Algorithm \ref{alg1} for the input vector $\lambda^{\top}{\mathcal Q}$, 211  binomials were generated. Only 64  binomials of the form $\bf{x}^{\nu}-\bf{y}^{\nu}$ and 
the leading terms  $\bf{x}^{\nu}$ are next. See Appendix D below for the M2 session that was used to generate this output. 

%Total binomials in the list: 211
%Number of valid binomials (pure-x - pure-y): 64
\begin{verbatim} 
x_14, x_12, x_11, x_10^2*x_15, x_10^2*x_13, x_9, x_8, x_7*x_10*x_15, 
x_7*x_10*x_13, x_7^2*x_15, x_7^2*x_13, x_6*x_10*x_15, x_6*x_10*x_13, 
x_6*x_7*x_15, x_6*x_7*x_13, x_6^2*x_15, x_6^2*x_13, x_5*x_10^4, x_5*x_7*x_10^3, 
x_5*x_7^2*x_10^2, x_5*x_7^3*x_10, x_5*x_7^4, x_5*x_6*x_10^3, x_5*x_6*x_7*x_10^2, 
x_5*x_6*x_7^2*x_10, x_5*x_6*x_7^3, x_5*x_6^2*x_10^2, x_5*x_6^2*x_7*x_10, 
x_5*x_6^2*x_7^2, x_5*x_6^3*x_10, x_5*x_6^3*x_7, x_5*x_6^4, x_4, x_3*x_10^4, 
x_3*x_7*x_10^3, x_3*x_7^2*x_10^2, x_3*x_7^3*x_10, x_3*x_7^4, x_3*x_6*x_10^3, 
x_3*x_6*x_7*x_10^2, x_3*x_6*x_7^2*x_10, x_3*x_6*x_7^3, x_3*x_6^2*x_10^2, 
x_3*x_6^2*x_7*x_10, x_3*x_6^2*x_7^2, x_3*x_6^3*x_10, x_3*x_6^3*x_7, 
x_3*x_6^4, x_2, x_1*x_10^4, x_1*x_7*x_10^3, x_1*x_7^2*x_10^2, x_1*x_7^3*x_10, 
x_1*x_7^4, x_1*x_6*x_10^3, x_1*x_6*x_7*x_10^2, x_1*x_6*x_7^2*x_10, x_1*x_6*x_7^3, 
x_1*x_6^2*x_10^2, x_1*x_6^2*x_7*x_10, x_1*x_6^2*x_7^2, x_1*x_6^3*x_10, 
x_1*x_6^3*x_7, x_1*x_6^4
\end{verbatim}

\section*{Appendix D}

Computations for the Hilbert basis $H_{\lambda}$ and a generating set of $I(\lambda)$ in Example \ref{ex2} were done in Macaulay2 using Algorithm \ref{alg1}. The output of this calculations are 211 binomials. Note that for the Hilbert basis only resulting binomials of the form $\bf{x}^{\nu}-\bf{y}^{\nu}$ have to be considered of which there are 64 binomials, and in Appendix C, we present the list of their leading monomials $\bf{x}^{\nu}$.   

\begin{verbatim} 
K = ZZ/31991; 
R = K[n,x_1..x_15, y_1..y_15, MonomialOrder => Lex];
h = ideal (x_1-y_1*n^4, x_2-y_2, x_3-y_3*n^4, x_4-y_4, x_5-y_5*n^4, 
n*x_6-y_6,n*x_7-y_7, x_8-y_8, x_9-y_9, n*x_10-y_10, x_11-y_11, x_12-y_12, 
x_13-y_13*n^2, x_14-y_14, x_15-y_15*n^2);
H = gens gb h
Q = entries H
toString Q
\end{verbatim}

\section*{Appendix E}

Computations for the generating set of $\nk$ for $k=1$ from Example \ref{ex3}. The following Macaulay2 computations  produce not only the Hilbert basis $H_\lambda$ of the monoid $\mathcal{M}_\lambda$ but also a generating set of $\mathcal{N}_\lambda^1$. After carefully checking the output, it is easy to see that the following three binomials $x_0x_4x_5-y_0y_4y_5$, $x_0x_2^2x_3^2x_5-y_0y_2^2y_3^2y_5$, 
$x_0x_1-y_0y_1$ are the only ones from the Gr\"obner basis of the specified form in Step 8 of Algorithm \ref{alg3}. In particular, every vector in $\mathcal{N}_\lambda^1$ is of the form $\nu +\mu$ where $\nu \in \{(1,0,0,0,0),(0,0,0,1,1),(0,2,2,0,1) \}$ and
$\mu \in \mathcal{M}_\lambda$.
 Similar computations are used to obtain the generating sets of all other $\nk$. 
\begin{verbatim} 
K = ZZ/31991; 
K[s,t,x_0,x_1..x_5,z_0,z_1..z_5, MonomialOrder => Lex];
h = ideal (s*x_0-z_0, x_1-z_1*s, x_2-z_2*t,s*t*x_3-z_3,s^2*x_4-z_4,x_5-z_5*s^3);
H = gens gb h
Q = entries H
toString Q
\end{verbatim} 

\begin{verbatim}
{{x_4^3*x_5^2-z_4^3*z_5^2, x_2*x_3*z_4^2*z_5-x_4^2*x_5*z_2*z_3, 
x_2*x_3*x_4*x_5-z_2*z_3*z_4*z_5, x_2^2*x_3^2*z_4-x_4*z_2^2*z_3^2, 
x_2^3*x_3^3*x_5-z_2^3*z_3^3*z_5, x_1*z_4*z_5-x_4*x_5*z_1, 
x_1*z_2^2*z_3^2*z_5-x_2^2*x_3^2*x_5*z_1, x_1*x_4*z_2*z_3-x_2*x_3*z_1*z_4, 
x_1*x_4^2*x_5-z_1*z_4^2*z_5, x_1*x_2*x_3-z_1*z_2*z_3, 
x_1^2*z_2*z_3*z_5-x_2*x_3*x_5*z_1^2, x_1^2*x_4-z_1^2*z_4, 
x_1^3*z_5-x_5*z_1^3, x_0*z_4^2*z_5-x_4^2*x_5*z_0, x_0*z_2*z_3-x_2*x_3*z_0, 
x_0*z_1*z_4-x_1*x_4*z_0, x_0*x_5*z_1^2-x_1^2*z_0*z_5, x_0*x_4*x_5-z_0*z_4*z_5, 
x_0*x_2*x_3*z_4-x_4*z_0*z_2*z_3, x_0*x_2*x_3*x_5*z_1-x_1*z_0*z_2*z_3*z_5, 
x_0*x_2^2*x_3^2*x_5-z_0*z_2^2*z_3^2*z_5, x_0*x_1-z_0*z_1, x_0^2*z_4-x_4*z_0^2, 
x_0^2*x_5*z_1-x_1*z_0^2*z_5, x_0^2*x_2*x_3*x_5-z_0^2*z_2*z_3*z_5, 
x_0^3*x_5-z_0^3*z_5, t*z_2-x_2, t*x_3*z_4^2*z_5-x_4^2*x_5*z_3, 
t*x_3*z_1*z_4-x_1*x_4*z_3, t*x_3*z_0-x_0*z_3, t*x_3*x_5*z_1^2-x_1^2*z_3*z_5, 
t*x_3*x_4*x_5-z_3*z_4*z_5, t*x_2*x_3^2*z_4-x_4*z_2*z_3^2, 
t*x_2*x_3^2*x_5*z_1-x_1*z_2*z_3^2*z_5, t*x_2^2*x_3^3*x_5-z_2^2*z_3^3*z_5, 
t*x_1*x_3-z_1*z_3, t*x_0*x_3*z_4-x_4*z_0*z_3, t*x_0*x_3*x_5*z_1-x_1*z_0*z_3*z_5, 
t*x_0*x_2*x_3^2*x_5-z_0*z_2*z_3^2*z_5, t*x_0^2*x_3*x_5-z_0^2*z_3*z_5, 
t^2*x_3^2*z_4-x_4*z_3^2, t^2*x_3^2*x_5*z_1-x_1*z_3^2*z_5, 
t^2*x_2*x_3^3*x_5-z_2*z_3^3*z_5, t^2*x_0*x_3^2*x_5-z_0*z_3^2*z_5, 
t^3*x_3^3*x_5-z_3^3*z_5, s*z_4*z_5-x_4*x_5, s*z_3^2*z_5-t^2*x_3^2*x_5, 
s*z_1-x_1, s*z_0*z_3*z_5-t*x_0*x_3*x_5, s*z_0^2*z_5-x_0^2*x_5, 
s*x_4*z_3-t*x_3*z_4, s*x_4*z_0-x_0*z_4, s*x_4^2*x_5-z_4^2*z_5, 
s*x_2*x_3-z_2*z_3, s*x_1*z_3*z_5-t*x_3*x_5*z_1, s*x_1*z_0*z_5-x_0*x_5*z_1, 
s*x_1*x_4-z_1*z_4, s*x_1^2*z_5-x_5*z_1^2, s*x_0-z_0, s*t*x_3-z_3, 
s^2*z_3*z_5-t*x_3*x_5, s^2*z_0*z_5-x_0*x_5, s^2*x_4-z_4, 
s^2*x_1*z_5-x_5*z_1, s^3*z_5-x_5}} 
\end{verbatim}

\end{document}